\newtheorem{thm}{Theorem}[section]
\newtheorem{lem}[thm]{Lemma}
\newtheorem{cor}[thm]{Corollary}
\newtheorem{fact}{Fact}
\newtheorem{prop}[thm]{Proposition}
\theoremstyle{definition}
\newtheorem{exam}[thm]{Example}
\theoremstyle{remark}
\newtheorem*{rem}{Remark}
\newcommand{\bd}{\stackrel{\rm bd}{\sim}}
\DeclareMathOperator{\dens}{dens}
\newcommand{\N}{\mathbb{N}}
\newcommand{\R}{\mathbb{R}}
\newcommand{\X}{\mathbb{X}}
\newcommand{\Z}{\mathbb{Z}}
\renewcommand{\phi}{\varphi}
\renewcommand{\rho}{\varrho}
\renewcommand{\epsilon}{\varepsilon}
\begin{document}

\title[BD classes in repetitive hulls]{Number of bounded distance
equivalence classes in hulls of repetitive Delone sets}

\author{Dirk Frettl\"oh}
\address{Technische Fakult\"at, Bielefeld University}
\email{dirk.frettloeh@math.uni-bielefeld.de}

\author{Alexey~Garber}
\address{School of Mathematical \& Statistical Sciences, The University of Texas Rio Grande Valley, 
1 West University Blvd., Brownsville, TX 78520, USA.}
\email{alexey.garber@utrgv.edu}

\author{Lorenzo Sadun}
\address{Department of Mathematics, University of Texas, 2515 Speedway, PMA 8.100
Austin, TX 78712, USA}
\email{sadun@math.utexas.edu}

\date{\today}

\begin{abstract}
Two Delone sets are bounded distance equivalent to each other if there
is a bijection between them such that the distance of corresponding 
points is uniformly bounded. Bounded distance equivalence is an
equivalence relation. We show that the hull of a repetitive Delone set 
with finite local complexity has either one equivalence class or
uncountably many.
\end{abstract}

\maketitle

\section{Introduction} \label{sec:intro}
Delone sets are central objects of study in the theory of aperiodic order 
and give rise to dynamical systems and topological objects with interesting
properties \cite{BG}.  
A \emph{Delone set} $\Lambda \subset \R^d$ is a set that is both \emph{uniformly
discrete} (that is, there is $r>0$ such that each open ball of radius $r$ contains
at most one point of $\Lambda$) and \emph{relatively dense} (that is,
there is $R>0$ such that each closed ball of radius $R$ contains at least one point 
of $\Lambda$). A \emph{point lattice} is the $\Z$-span $\langle v_1, \ldots,
v_d \rangle_{\Z}$ of $d$ linear independent vectors in $\R^d$. Each point 
lattice is a Delone set. 
A Delone set $\Lambda$ in $\R^d$ is \emph{$d$-periodic} if the set 
\[ P_{\Lambda} = \{ t \in \R^d \mid t+\Lambda = \Lambda \} \]
of its period vectors is a point lattice. Of course for every point lattice 
$\Lambda$ we have $\Lambda = P_{\Lambda}$, hence each point lattice
is $d$-periodic. A Delone $\Lambda$ set is \emph{nonperiodic} if 
$P_{\Lambda} = \{ 0 \}$. 

Two Delone sets $\Lambda, \Lambda'$ in $\R^d$ are called 
\emph{bounded distance equivalent} ($\Lambda \bd \Lambda'$) or \emph{bde}, if there 
is a bijection $\phi \colon  \Lambda \to \Lambda'$ such that $|x - \phi(x)|$ 
is uniformly bounded. It is easy to see that $\bd$ is an equivalence relation
for Delone sets.

In the 1990s several authors studied the question of whether a given Delone
set in $\R^d$ is bounded distant equivalent to a point lattice
\cite{DSS,DO1,HZ}.
In \cite{DO1} it was shown that any two point lattices in $\R^d$ with the
same density are bde. It is a simple consequence that any two $d$-periodic Delone sets
in $\R^d$ with the same density are bde. This leads to considering aperiodic
Delone sets, for instance the vertices of a Penrose tiling. There are two well 
studied classes of aperiodic Delone sets: cut-and-project sets and Delone 
sets arising from substitution tilings. For details and a precise definition
of an aperiodic Delone set, see \cite{BG}. 
%In particular, any aperiodic Delone set is nonperiodic. 
Recently, the question of whether an aperiodic Delone set is bde
to a point lattice has gained some interest. For the bd-equivalence of
cut-and-project sets to a point lattice see \cite{DO2,FG,H,HK,HKK}.
The bd-equivalence of Delone sets from substitution tilings  to a point lattice
was studied in \cite{ACG,FG,FSS,HZ,Solo1, Solo3}. Since the bd-equivalence of
Delone sets to a point lattice is well understood, the focus now turns 
to the question of when two Delone sets are bde.

This is where hulls of nonperiodic Delone sets come into play.
In the theory of aperiodic order  the \emph{hull} $\X_{\Lambda}$ of a 
given Delone set $\Lambda$ is a central object of interest. 
The hull of $\Lambda$ is the closure of the orbit 
of $\Lambda$ 
under translations in the topology defined by the ``big box'' metric (aka the Gromov-Hausdorff topology).\footnote{Some authors work
with a different topology, called the ``local rubber'' or ``Chabauty-Fell'' topology. When the
Delone set has finite local complexity, defined below, then the two 
topologies agree.}
Specifically, the distance between two $(r,R)$-Delone sets $\Lambda$ and 
$\Lambda'$ is the infimum over all $\varepsilon  \in (0,1)$ such that there
exist $x, x' \in \R^d$, each of size $\varepsilon/2$ or less, such that 
$\Lambda-x$ and $\Lambda'-x'$ agree exactly on $B_{1/\varepsilon}$. If no
such $\varepsilon$ exists, then $d(\Lambda, \Lambda')=1$.

The hull may be studied as a topological object \cite{AP,KP,Sbook},
or as a dynamical system $(\X_{\Lambda}, \R^d)$ \cite{BG}, where $\R^d$ 
stands for the action of translations by $x \in \R^d$. 

This raises the question ``How many bde  classes 
does $\X_{\Lambda}$ contain?''. A partial answer was given in \cite{Solo2}:
If $\Lambda$ comes from a substitution tiling meeting some technical conditions, then 
$\X_{\Lambda}$ consists
of uncountably many bde classes. Here we provide a generalization that covers all 
Delone sets that are repetitive and have finite local complexity (see definitions below). 
This category includes the vast majority of the substitution tilings and cut-and-project sets 
that have been studied to date.

Two geometric properties of the Delone set $\Lambda$ are closely associated with 
dynamical properties of $(\X_{\Lambda},\R^d)$. 
A \emph{patch} in a Delone set $\Lambda$ is a set $\Lambda \cap K$
for some compact $K \subset \R^d$. A Delone set $\Lambda$ has  
\emph{finite local complexity} (FLC)
if for any compact set $K \subset \R^d$ there are only finitely many different 
patches $(\Lambda-x) \cap K$ $(x \in \R^d)$, up to translation. 
A Delone set $\Lambda$ is called \emph{repetitive} if for each compact set 
$K \subset \R^d$ such that $K \cap \Lambda$ is non-empty, the set 
\[ \{ x \in \R^d \mid (\Lambda-x) \cap K = \Lambda \cap K \} \]
is a Delone set. The uniform density radius $R$ of this Delone set is the 
\emph{repetitivity radius}  of the patch $\Lambda \cap K$. 
The following fact is the essence of the work of several 
authors, see \cite{BG, Sbook} for details.

\begin{fact}
Let $\Lambda \subset \R^d$ be a Delone set. Then $\X_\Lambda$ is compact if 
and only if $\Lambda$ has FLC.
If $\Lambda$ has FLC, then the dynamical system $(\X_{\Lambda}, \R^d)$ 
is minimal (that is, each orbit is dense) if and only if $\Lambda$ is repetitive.
\end{fact}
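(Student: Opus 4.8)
The plan is to prove the two assertions separately; both reduce to compactness and approximation arguments in the big box metric, with finite local complexity supplying the compactness that drives everything. Throughout I would regard $\X_\Lambda$ as a subset of the space $\mathcal{M}$ of all $(r,R)$-Delone sets with the big box metric $d$, noting that $\X_\Lambda$ is by definition a closed subset (it is an orbit closure) and that limits of $(r,R)$-Delone sets are again $(r,R)$-Delone, so $\X_\Lambda$ is complete. The first assertion then amounts to relating compactness of $\X_\Lambda$ to FLC; a convenient alternative, hinted at in the footnote, is that the Chabauty--Fell hull is always compact and coincides with the metric hull precisely under FLC, but I would instead argue directly.

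For ``FLC $\Rightarrow$ compact'' I would establish sequential compactness by a diagonal extraction. Given a sequence $(\Lambda_n) \subset \X_\Lambda$, each $\Lambda_n$ has a point in the compact region $\overline{B_R}$, and for every $j \in \N$ the patch inside $B_j$ realizes, by FLC, one of only finitely many shapes up to translation. Combining a pigeonhole over these finitely many shapes with convergence of the marked point positions in $\overline{B_R}$, and then diagonalizing over $j$, yields a subsequence that on each ball agrees with a fixed limiting patch up to a shift tending to $0$; this is exactly big box convergence to a limit $\Lambda_\infty \in \X_\Lambda$. For the converse I would argue contrapositively: if $\Lambda$ fails FLC, some radius $\rho$ admits infinitely many pairwise translation-inequivalent patches in $B_\rho$, realized by orbit elements $\Lambda - y_i$; because these patches differ under every translation, no two of the $\Lambda - y_i$ can be matched on $B_{1/\epsilon} \supseteq B_\rho$ by the admissible shift of size $\le \epsilon/2$ once $\epsilon$ is small, so the sequence is uniformly separated and has no convergent subsequence, whence $\X_\Lambda$ is not compact.

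For the second assertion I assume FLC, so $\X_\Lambda$ is compact, and I would first record that every $\Lambda' \in \X_\Lambda$ has all its finite patches among those of $\Lambda$, since $\Lambda'$ agrees with a translate of $\Lambda$ on each ball. To prove ``repetitive $\Rightarrow$ minimal'' I would show every orbit is dense. Fix $\Lambda', \Lambda'' \in \X_\Lambda$ and $\rho > 0$; the target patch $\Lambda'' \cap B_\rho$ is a patch of $\Lambda$, hence by repetitivity occurs in $\Lambda$ relatively densely, with some repetitivity radius $R'$. Since $\Lambda'$ agrees with a translate of $\Lambda$ on a ball of radius exceeding $R' + \rho$, the set $\Lambda'$ itself contains a copy of $\Lambda'' \cap B_\rho$, and the corresponding translate of $\Lambda'$ lies within $\epsilon$ of $\Lambda''$ once $\rho > 1/\epsilon$. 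Letting $\rho \to \infty$ gives density of the orbit of $\Lambda'$, so the system is minimal.

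To finish with ``minimal $\Rightarrow$ repetitive'' I would again use the contrapositive. If $\Lambda$ is not repetitive, there is a nonempty patch $P = \Lambda \cap B_\rho$ whose occurrences are not relatively dense, so there are balls $B_{s_n}(c_n)$ with $s_n \to \infty$ meeting no copy of $P$; by the compactness from the first part the translates $\Lambda - c_n$ subconverge to some $\Lambda^\ast \in \X_\Lambda$, and passing to the limit on each ball shows $\Lambda^\ast$ contains no copy of $P$. As $\Lambda$ does contain $P$, no translate of $\Lambda^\ast$ can agree with $\Lambda$ on any ball enclosing a copy of $P$, so $\Lambda$ lies outside the orbit closure of $\Lambda^\ast$ and the system is not minimal. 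I expect the main obstacle to be neither assertion's overall strategy but the metric bookkeeping underlying them: quantifying precisely how the admissible $\epsilon/2$-shifts interact with the radius $1/\epsilon$ when converting ``finitely many patches'' into genuine convergence and separation, and verifying that the limit sets produced by compactness really do inherit, or really do avoid, the patches claimed. FLC is exactly the hypothesis that tames these limits, forcing local configurations to stabilize rather than drift continuously.
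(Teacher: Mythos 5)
First, a point of comparison: the paper does not actually prove this Fact --- it quotes it from the literature (\cite{BG,Sbook}) --- so your sketch can only be measured against the standard arguments there. Three of your four implications follow that standard line and are sound in outline: the pigeonhole-plus-diagonal extraction for ``FLC $\Rightarrow$ compact'', and both directions of the repetitivity/minimality equivalence (where the only care needed is that a ``patch of $\Lambda''$'' must be read as a translate of a patch $\Lambda\cap K$ of $\Lambda$ for a possibly off-centre ball $K$, which the paper's definition of repetitivity accommodates).

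The genuine gap is in ``compact $\Rightarrow$ FLC''. Your claim that orbit elements $\Lambda-y_i$ whose central $B_\rho$-patches are pairwise translation-inequivalent must be uniformly separated in the big box metric is false. If $\Lambda-y_1-x$ and $\Lambda-y_2-x'$ agree exactly on $B_{1/\epsilon}$, you only learn that the patch of $\Lambda-y_1$ in the \emph{shifted} ball $B_\rho(x)$ is a translate of the patch of $\Lambda-y_2$ in $B_\rho(x')$; this says nothing about the patches in $B_\rho(0)$. Concretely, $\Z$ and $\Z-\delta$ are within roughly $2\delta$ of each other in the metric, yet their patches in $[-1,1]$ have cardinalities $3$ and $2$ and are not translates of one another: the central patch jumps discontinuously as points cross the boundary of the window, so closeness in the hull does not control the translation class of the central patch. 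Consequently an arbitrary choice of representatives of infinitely many patch classes need not be a separated sequence, and your contrapositive does not go through as written. Two standard repairs: (a) use the equivalence of FLC with local finiteness of $\Lambda-\Lambda$ --- an accumulation point $v$ of $\Lambda-\Lambda$ gives distinct $q_n-p_n\to v$ with $p_n,q_n\in\Lambda$, and any subsequential limit $\Gamma$ of $\Lambda-p_n$ would be forced (by exact agreement on large balls) to contain $0$ together with all $q_n-p_n$ for $n$ large, violating uniform discreteness of $\Gamma$; or (b) show that total boundedness of the orbit already implies FLC, by observing that for a fixed locally finite $\Gamma_k$ the patches $\Gamma_k\cap B_\sigma(w)$ with $\|w\|\le\epsilon$ take only finitely many values, since only finitely many points of $\Gamma_k$ can cross $\partial B_\sigma(w)$ as $w$ varies over a bounded set. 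Either route closes the gap; the rest of your sketch stands modulo routine bookkeeping.
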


Now we can state our main result.

\begin{thm}\label{thm:one-or-many}
Let $\Lambda$ be a repetitive Delone set in $\R^d$ having FLC and such that the 
density of $\Lambda$ exists.
Let $\X_{\Lambda}$ be the hull of $\Lambda$. Then
$\X_{\Lambda}$ either consists of a single bde class or  $\X_{\Lambda}$
contains uncountably many bde classes.
\end{thm}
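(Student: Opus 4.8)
The plan is to reduce bounded distance equivalence to a quantitative statement about point-count discrepancies and then to extract a Cantor set of pairwise inequivalent points by a Baire-category argument. Write $\rho$ for the density of $\Lambda$; since the hull is minimal and the density exists, every $\Gamma\in\X_\Lambda$ has the same density $\rho$. For $\Gamma\in\X_\Lambda$ and a bounded region $A$ with rectifiable boundary set
\[
D_\Gamma(A)=\#(\Gamma\cap A)-\rho\,\vol(A).
\]
The elementary (min-cut/Hall-type) observation I would isolate first is that a bijection $\phi\colon\Gamma\to\Gamma'$ with $|x-\phi(x)|\le C$ maps $\Gamma\cap A$ into the $C$-neighbourhood of $A$, so there is a constant $\kappa=\kappa(\rho,C)$ with
\[
\bigl|\#(\Gamma\cap A)-\#(\Gamma'\cap A)\bigr|=\bigl|D_\Gamma(A)-D_{\Gamma'}(A)\bigr|\le \kappa\,C\,\mathcal H^{d-1}(\partial A)
\]
for all such $A$. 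Thus $\Gamma\bd\Gamma'$ forces the discrepancy difference to be bounded by a multiple of the perimeter, and conversely bounded discrepancy-per-perimeter yields a bounded-displacement matching (this is Laczkovich's transference/flow criterion, which I would cite).

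This dichotomizes the problem according to whether
\[
s(\Lambda)=\sup\Bigl\{\,|D_\Gamma(A)|/\bigl(1+\mathcal H^{d-1}(\partial A)\bigr)\ :\ \Gamma\in\X_\Lambda,\ A\ \text{bounded}\,\Bigr\}
\]
is finite or infinite; because patches are shared across a minimal hull, this is a property of $\Lambda$ alone. If $s(\Lambda)<\infty$ then by the criterion every $\Gamma\in\X_\Lambda$ is bde to the lattice $\rho^{-1/d}\Z^d$, hence all elements of the hull are pairwise bde and $\X_\Lambda$ is a single class, which is the first alternative. The whole content is therefore the case $s(\Lambda)=\infty$, in which I must produce uncountably many classes.

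For the case $s(\Lambda)=\infty$ I would argue by category on $\X_\Lambda\times\X_\Lambda$, which is a perfect Polish space (the hull is compact, metric, and has no isolated points because the action is minimal and $\X_\Lambda$ is infinite). Write the graph of $\bd$ as $\bigcup_{C\in\N}F_C$, where $F_C$ is the set of pairs $(\Gamma,\Gamma')$ admitting a bijection of displacement at most $C$. Each $F_C$ is closed: a limit of displacement-$C$ matchings of uniformly discrete FLC sets subconverges, by a diagonal argument, to a displacement-$C$ matching. The key step is to show each $F_C$ is nowhere dense. Given a basic open set determined by prescribing $\Gamma,\Gamma'$ on a ball $B_T$, I would use $s(\Lambda)=\infty$ to fix a patch carrying discrepancy exceeding $\kappa\,C$ times its perimeter, and use repetitivity to realise, outside $B_T$, this high-discrepancy region in one coordinate while the other coordinate carries no matching surplus there; by the displayed inequality such a pair lies outside $F_C$, and since the obstruction is witnessed on a finite patch it persists on an open set. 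Hence the graph of $\bd$ is meager off the diagonal.

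It then remains to invoke Mycielski's theorem: a meager, symmetric, diagonal-free relation on a perfect Polish space admits a Cantor set on which it never holds, so there is a Cantor set of pairwise non-bde elements of $\X_\Lambda$, giving uncountably many bde classes, which is the second alternative. The step I expect to be the main obstacle is the nowhere-density of $F_C$, and specifically the \emph{co-occurrence} problem hidden in it: realising, as genuine elements of the hull, a pair in which a prescribed high-discrepancy blob appears in one coordinate but is not shadowed by a comparable surplus in the other, at a location disjoint from the controlled ball. This is exactly where repetitivity (recurrence of every patch with bounded gaps, allowing the free placement of blobs) and finite local complexity (compactness, so that the resulting configurations lie in $\X_\Lambda$) do the real work; the min-cut inequality and Mycielski's theorem are then the soft endpoints of the argument. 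An alternative to the final two steps would be to build the Cantor set by hand, coding a binary sequence by the signs of discrepancy blobs installed at a rapidly increasing sequence of scales, with the growth chosen so that the blob at the largest differing scale cannot be cancelled at the other scales; I would use Mycielski as the cleaner route and keep the explicit construction in reserve.
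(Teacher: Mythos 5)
Your architecture is genuinely different from the paper's at the extraction stage: the paper builds the uncountable family by hand, nesting ``deviant'' and ``normal'' patches at a rapidly growing sequence of scales coded by words in $\{D,N\}^{\N}$ --- essentially the construction you keep ``in reserve'' --- whereas you propose to show that each set $F_C$ of pairs admitting a displacement-$C$ matching is closed and nowhere dense and then invoke Mycielski. The soft ends of your argument are fine: the dichotomy via Laczkovich, the closedness of $F_C$ (a K\"onig's-lemma diagonal argument applied to the matchings and their inverses, using FLC), and the Mycielski endgame (the hull is perfect, since the stabilizer of an isolated point would be an open, hence improper, closed subgroup of $\R^d$). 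What the Baire route buys is a cleaner separation of the combinatorics from the analysis; what the paper's explicit coding buys is that it never needs the meagerness machinery. But both routes funnel through the same quantitative core, and that is exactly where your proposal has a genuine gap.

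You write that you will ``use repetitivity to realise, outside $B_T$, this high-discrepancy region in one coordinate while the other coordinate carries no matching surplus there.'' As stated this would fail: repetitivity works \emph{against} you here, since every sufficiently large ball in \emph{every} element of the hull contains a copy of the high-discrepancy patch, so you cannot arrange for $\Gamma'$ simply to lack the surplus. What you must arrange is that, at the \emph{same} support $A$ on which $\Gamma$ carries discrepancy exceeding $c\,\mu(A^{+1})$, the set $\Gamma'$ carries discrepancy of the opposite sign (or near zero), and two quantitative inputs are needed for this. First, an averaging argument (Lemma \ref{lem:exceptional} of the paper): since $\#((A+x)\cap\Lambda)$ averages to $\rho\,\mu(A)$ over $x$, some translate of the support of a rich region is poor or balanced. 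Second, and crucially, a translation-robustness estimate (Lemmas \ref{lem:symdiff} and \ref{lem:E-x-y}): when you use repetitivity to put the prescribed central patch of $\Gamma'$ and the balanced region into the correct relative position, you can only align the supports up to an error $\ell$ comparable to a repetitivity radius, and shifting the support of $A$ by $\ell$ can change the point count by as much as $q\,\ell^d\mu(A^{+1})$. Hence the deviance constant $c$ must be chosen \emph{after} $T$, $C$ and the relevant repetitivity radius, large enough to absorb both the matching defect and this alignment error (and one must check, as in Lemma \ref{lem:thickboundary}, that tube volumes rather than $\mathcal H^{d-1}(\partial A)$ are the right normalization for arbitrary regions). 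Without these two lemmas the nowhere-density of $F_C$ --- and equally your reserve construction --- does not close; with them, either route completes the proof.
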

In fact we show that the number of bde classes is either 1 or $2^{\aleph_0}$,
where $\aleph_0$ is the cardinality of $\Z$. It is clear that the number of bde classes
can't be larger than $2^{\aleph_0}$ \cite{G,SmSo}, insofar as there are only $2^{\aleph_0}$ elements of 
$\X_\Lambda$, up to translation. 

During the completion of this paper, Smilansky and Solomon released a
preprint \cite{SmSo} in which a stronger version of this result is proven using
the language of dynamical systems. Their theorem, which was developed completely independently
from ours, does not require FLC and 
considers the orbit closure of $\Lambda$ in the local rubber topology. They replace 
our assumption of repetitivity with one of minimality. They also do not require 
that the density of $\Lambda$ exists. The cost of that generality is 
complexity, in that their proof is longer and considerably more technical than ours. 

An interesting consequence of both results is the following: 
In \cite{FG} it was shown that a certain one-dimensional collection
of Delone sets, namely, the set of cut-and-project sets using 
half of the window of the famous Fibonacci tiling \cite{BG}, has
at least two different bde classes. Theorem 
\ref{thm:one-or-many} then yields the following result.

\begin{cor}
The hull $\X_{HF}$ of the 'Half-Fibonacci' cut-and-project tiling 
contains uncountably many bde classes.
\end{cor}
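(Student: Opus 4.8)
The plan is to derive the Corollary as a direct application of Theorem \ref{thm:one-or-many}. That theorem gives a sharp dichotomy, one bde class or uncountably many, so once we know that $\X_{HF}$ has \emph{more than one} bde class the uncountability is automatic. The proof therefore has only two ingredients: checking that the Half-Fibonacci set meets the hypotheses of Theorem \ref{thm:one-or-many}, and importing from \cite{FG} the fact that at least two of its bde classes are distinct.

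First I would fix a single Half-Fibonacci cut-and-project set $\Lambda \subset \R$ and identify $\X_{HF}$ with its hull $\X_{\Lambda}$. I would then verify the three hypotheses. Since the acceptance window is (half of) a bounded interval and the slope of the physical space is irrational, $\Lambda$ has FLC, because only finitely many patches of each diameter occur, and is repetitive, because the associated cut-and-project dynamical system is minimal (the orbit in internal space being dense). The density of $\Lambda$ exists by Weyl equidistribution: the number of points in $[-T,T]$ is asymptotically proportional to $2T$ times the length of the window. All three properties are standard for cut-and-project sets with an interval window and are unaffected by passing from the full Fibonacci window to half of it, a half-interval being again an interval.

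Next I would invoke \cite{FG}, in which the one-parameter collection of Half-Fibonacci sets, all of which appear in $\X_{HF}$ up to translation, is shown to contain at least two sets lying in distinct bde classes. In particular $\X_{HF}$ does not consist of a single bde class. Combining this with the hypotheses verified above, Theorem \ref{thm:one-or-many} forces the number of bde classes to be either $1$ or uncountably many, and \cite{FG} rules out the first alternative, so $\X_{HF}$ contains uncountably many bde classes.

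I do not expect a genuine obstacle here, since the heavy lifting is done entirely by Theorem \ref{thm:one-or-many}; the argument is a two-line deduction once the inputs are in place. The only point deserving care, rather than being hard in any deep sense, is confirming that the family analysed in \cite{FG} really sits inside the one hull $\X_{HF}$ and that its members furnish translationally inequivalent representatives, so that the ``at least two classes'' statement is a statement about $\X_{HF}$ itself and not about some larger ambient space. Once this identification is pinned down, no estimates remain.
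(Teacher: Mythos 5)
Your proposal is correct and matches the paper's own (implicit) argument exactly: verify that the Half-Fibonacci set satisfies the hypotheses of Theorem \ref{thm:one-or-many}, import from \cite{FG} the existence of at least two distinct bde classes in the hull, and conclude via the dichotomy. The paper leaves the hypothesis-checking unstated, so your added care about FLC, repetitivity, density, and the identification of the \cite{FG} family with elements of $\X_{HF}$ is a welcome but not divergent elaboration.
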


\section{Auxiliary results}

Let us fix some more notation. In the sequel, let $\#M$ denote the cardinality of 
a (typically finite) set $M$. We denote the Euclidean norm of $x \in \R^d$ 
by $\|x\|$. The closed ball of radius $r$ about $x$ is denoted by $B_r(x)$. 
The $d$-dimensional Lebesgue measure  of a set $A \subset \R^d$ is 
denoted by $\mu(A)$ and all sets in the paper are compact and measurable unless noted otherwise. Let $A^{+\varepsilon}$ denote the $\varepsilon$-tube
of the boundary of $A$. That is, 
\[ A^{+\varepsilon} = \{ x \in \R^d \mid d_2(x,\partial A) \le \varepsilon \}, \]
where $\partial A$ is the topological boundary of $A$ and $d_2$ is the standard Euclidean distance in $\R^d$. Hence $\mu(A^{+1})$ 
is the Lebesgue measure of the set of all points whose distance 
to the boundary of $A$ is one or less.

Any $d$-periodic Delone set has a well defined density, in the sense
of ``average number of points per unit volume''. The same holds for cut-and-project sets and for Delone sets from primitive substitutions.
In general, the definition of the density of a Delone set can be
tricky. It is easy to construct Delone sets having no well defined density
(for instance $-\N \cup 2\N$ in $\R$). There are even repetitive
Delone sets without density, see \cite[Thm. 5.1]{LP}, or \cite{FS}
for a simpler example.

In order to define the density of an arbitrary Delone sets we need
van Hove sequences. A \emph{van Hove sequence} is a sequence
$(A_i)_i$ of compact subsets of $\R^d$ such that for all $\varepsilon>0$
\begin{equation} \label{eq:def-vanhove}
\lim_{i \to \infty} \frac{\mu(A_i^{+\varepsilon})}{\mu(A_i)} =0. 
\end{equation}
A Delone $\Lambda$ set has density $\dens(\Lambda)$ if for
all van Hove sequences $(A_i)_i$ the limits 
\[ \lim_{i \to \infty} \frac{\#(A_i \cap \Lambda)}{\mu(A_i)} \]
exist and are identical. In that case $\dens(\Lambda)$ is the value of these limits.
An important tool in this context is the following result. In the context of
this paper it translates as follows.

\begin{thm}[\cite{Lac}] \label{thm:lacz}
Let $\Lambda$ be a Delone set in $\R^d$ with density $\dens(\Lambda)$.
$\Lambda$ is bde to some lattice in $\R^d$ if and only if there is $c>0$ 
such that, for all bounded measurable sets $E \subset \R^d$, 
\[ \big| \#(\Lambda \cap E) - \dens(\Lambda) \mu(E) \big| 
\le c \mu(E^{+1}) \]
\end{thm}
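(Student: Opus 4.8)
The plan is to prove the two implications separately; the forward ("only if") direction is a short counting argument, while the reverse ("if") direction carries all the weight and is handled by a transportation/matching argument.

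\emph{Necessity.} First I would record that any lattice $L$ itself satisfies the asserted inequality: tiling $\R^d$ by a fundamental domain of $L$, the count $\#(L\cap E)$ differs from $\dens(L)\mu(E)$ by at most the number of cells meeting $\partial E$, and since each cell has bounded diameter this error is $O(\mu(E^{+1}))$. Now suppose $\phi\colon\Lambda\to L$ is a bijection with $\|x-\phi(x)\|\le M$. Every point of $\Lambda\cap E$ maps into $L\cap E$ except possibly those within $M$ of $\partial E$, and symmetrically every point of $L\cap E$ is hit from $\Lambda\cap E$ up to the same boundary layer; hence $|\#(\Lambda\cap E)-\#(L\cap E)|$ is bounded by the number of points of $L$ and of $\Lambda$ lying in $E^{+M}$, which is $O(\mu(E^{+M}))=O(\mu(E^{+1}))$. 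Combining with the lattice estimate and $\dens(\Lambda)=\dens(L)$ gives the bound for $\Lambda$.

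\emph{Sufficiency.} After an invertible linear change of coordinates (which preserves both $\bd$ and, up to constants, the boundary-tube measure) I would take $L=\Z^d$ scaled to density $\dens(\Lambda)$, fix a radius $M$, and form the locally finite bipartite graph joining $p\in\Lambda$ to $q\in L$ whenever $\|p-q\|\le M$. A perfect matching in this graph is exactly a displacement-$\le M$ bijection $\Lambda\to L$, so the goal becomes producing such a matching. By the infinite marriage theorem (Rado/K\"onig), equivalently by max-flow-min-cut on the associated network, it suffices to verify Hall's condition $\#N(F)\ge\#F$ for every finite $F$ in either vertex class.

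\emph{The main obstacle.} The difficulty is that the discrepancy hypothesis does not deliver Hall's condition cleanly. Writing $U$ for the $M$-neighborhood of a finite $F\subseteq\Lambda$, so that $N(F)=L\cap U$ and $F\subseteq\Lambda\cap U$, the lattice bound and the hypothesis together yield only
\[ \#N(F)\;\ge\;\#F-(c+c_L)\,\mu(U^{+1}), \]
leaving a deficit of the order of the boundary tube. For "bulky" $F$ this is harmless because $\mu(U^{+1})$ is lower order than $\mu(U)$, and for very sparse $F$ each point already contributes the full ball of radius $M$ worth of lattice points; the genuine trouble is the intermediate, thin-but-extended configurations, and one cannot simply send $M\to\infty$. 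Absorbing this deficit uniformly over all finite $F$ is the technical heart of the argument: passing to the min-cut formulation, one must show every finite cut has nonnegative capacity by charging the boundary contribution against the interior surplus $\dens(\Lambda)\mu(U)-\#F$ accumulated on the bulk, and iterating this bookkeeping. Once the cut (equivalently Hall) condition is secured---symmetrically for finite subsets of $L$---the matching, and hence the bounded bijection, exists, completing the proof.
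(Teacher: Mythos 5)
First, a point of orientation: the paper does not prove this statement at all --- it is imported from Laczkovich \cite{Lac}, with only the remark that the proof rests on the infinite Hall marriage theorem \cite{Rado}. So your attempt has to stand on its own. The necessity direction you give is correct and complete: a lattice satisfies the discrepancy bound by a fundamental-domain count, a displacement-$\le M$ bijection can move a point across $\partial E$ only if that point lies in $E^{+M}$, and uniform discreteness together with $\mu(E^{+M})\le M^d\mu(E^{+1})$ (Lemma \ref{lem:thickboundary} of the paper) converts the boundary-layer count into $O(\mu(E^{+1}))$; combined with $\dens(\Lambda)=\dens(L)$ this gives the inequality.

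The sufficiency direction, however, contains a genuine gap, and it sits exactly where you flag it. The bipartite graph and Rado's theorem are the right frame, but \emph{verifying Hall's condition is the entire content of the theorem}, and ``charging the boundary contribution against the interior surplus and iterating this bookkeeping'' is a description of a wish, not an argument. Your own estimate $\#N(F)\ge\#F-(c+c_L)\mu(U^{+1})$ leaves a deficit of boundary order, and for a fixed $M$ this cannot be absorbed by bulk surplus in general: take $F=\Lambda\cap U'$ where $U'$ is a porous, checkerboard-like union of $n$ unit cells inside a box of volume $2n$. Then $\mu((U')^{+1})\asymp n\asymp\mu(U')$, the ``gain'' $\mu((U')^{+M}\setminus U')$ is also only $\asymp n$ once $n\gg M^d$, and whether $\#N(F)\ge\#F$ holds comes down to a competition of constants that enlarging $M$ does not win --- this is precisely your ``thin-but-extended'' case, which you correctly identify and then do not handle. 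Laczkovich's actual resolution is a multi-scale argument: he works with lattice cubes of dyadically increasing side lengths, applies the discrepancy hypothesis simultaneously at every scale to the pieces of (the neighborhood of) $F$, and shows that the total deficit summed over scales is dominated by the lattice points gained by a bounded number of layers of enlargement; only then does Hall's condition follow for some $M=M(c,d)$. Without that lemma, or a substitute for it, your proof of the ``if'' direction is incomplete.
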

The proof of this result relies on the infinite version of 
the Hall Marriage Theorem \cite{Rado}. The same arguments can be used 
not only to compare a Delone set $\Lambda$ with $\alpha \Z^d$, 
but to compare two arbitrary Delone sets $\Lambda, \Lambda'$ 
as well. 
\begin{thm}[\cite{FSS}] \label{thm:l-not-bd}
Let $\Lambda, \Lambda'$ be two Delone sets in $\R^d$.  
Suppose there is a van Hove sequence $(A_i)_i$ such that 
\begin{equation}\label{eq:non_BD_condition}
\lim_{n\to\infty} 
\frac{| \#(\Lambda \cap A_i) - \#(\Lambda' \cap A_i) |}{\mu(A^{+1}_i)} 
= \infty,
\end{equation}   
then $\Lambda \not\bd \Lambda'$.
\end{thm}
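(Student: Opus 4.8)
The plan is to prove the contrapositive of Theorem~\ref{thm:lacz}: if $\Lambda \bd \Lambda'$, then condition \eqref{eq:non_BD_condition} must fail for \emph{every} van Hove sequence. So I would assume a bde bijection $\phi\colon \Lambda \to \Lambda'$ with $\|x-\phi(x)\| \le M$ for all $x \in \Lambda$ and some constant $M>0$, and then bound $|\#(\Lambda \cap A_i) - \#(\Lambda' \cap A_i)|$ from above in terms of $\mu(A_i^{+1})$ (or a fixed multiple thereof). This would show the limit in \eqref{eq:non_BD_condition} is in fact \emph{finite} (indeed bounded), contradicting the hypothesis that it is $+\infty$, and hence force $\Lambda \not\bd \Lambda'$.

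The key geometric observation is a ``surface-to-volume'' comparison. First I would fix $i$ and compare the two point counts inside $A_i$ by routing through the bijection $\phi$. A point of $\Lambda$ lying in $A_i$ whose image $\phi(x)$ also lies in $A_i$ contributes to both counts and cancels; the discrepancy $|\#(\Lambda \cap A_i) - \#(\Lambda' \cap A_i)|$ is therefore controlled by the points that $\phi$ moves across the boundary $\partial A_i$ — that is, points $x \in \Lambda \cap A_i$ with $\phi(x) \notin A_i$, together with points $y \in \Lambda' \cap A_i$ whose preimage $\phi^{-1}(y)$ lies outside $A_i$. Because $\|x-\phi(x)\| \le M$, every such boundary-crossing point lies within distance $M$ of $\partial A_i$, hence in the tube $A_i^{+M}$. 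So both types of ``uncancelled'' points are counted among $\Lambda \cap A_i^{+M}$ or $\Lambda' \cap A_i^{+M}$.

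Next I would convert the number of points in the tube $A_i^{+M}$ into a measure-theoretic bound. Since $\Lambda$ and $\Lambda'$ are Delone sets, say each is uniformly discrete with packing radius $r$, the balls $B_r(x)$ for $x \in \Lambda$ are pairwise disjoint, so the number of points of $\Lambda$ in any region $U$ is at most $\mu(U^{+r})/\mu(B_r(0))$, a fixed constant times the volume of a slight fattening of $U$. Applying this to $U = A_i^{+M}$ gives $\#(\Lambda \cap A_i^{+M}) \le c_0\,\mu(A_i^{+(M+r)})$ for an explicit constant $c_0$ depending only on $r$ and $d$. The only remaining wrinkle is relating $\mu(A_i^{+(M+r)})$ back to $\mu(A_i^{+1})$; a routine covering argument shows $\mu(A_i^{+t}) \le c_1(t)\,\mu(A_i^{+1})$ for any fixed $t \ge 1$ with a constant depending only on $t$ and $d$ (cover the thick tube by finitely many translates of thin tubes). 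Combining these gives $|\#(\Lambda \cap A_i) - \#(\Lambda' \cap A_i)| \le c\,\mu(A_i^{+1})$ with $c$ independent of $i$, so the ratio in \eqref{eq:non_BD_condition} stays bounded and cannot tend to infinity.

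The main obstacle I anticipate is the last wrinkle — controlling the thicker tube $A_i^{+(M+r)}$ by the unit tube $A_i^{+1}$ uniformly in $i$. The van Hove condition \eqref{eq:def-vanhove} only tells us that $\mu(A_i^{+\varepsilon})/\mu(A_i) \to 0$; it does not directly compare tubes of different fixed widths, and for pathological boundaries the thick tube could in principle have much larger measure than a naive factor suggests. I expect the clean fix is the covering estimate $\mu(A^{+t}) \le \lceil t \rceil^{d}\,\mu(A^{+1})$ (or a similar dimension-only bound), which holds for all compact $A$ and needs no regularity of $\partial A$; establishing this cleanly, rather than hand-waving it, is where the real care is required. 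Everything else — the boundary-crossing cancellation and the packing bound — is elementary and robust.
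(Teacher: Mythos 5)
Your proof is correct, but note that the paper itself does not prove this theorem: it is quoted from \cite{FSS}, where it is derived in the setting of regions that are unions of lattice cubes, and the surrounding text only remarks that a general van Hove sequence can be approximated by such unions. Your argument is a clean, self-contained alternative that proves the contrapositive directly: decompose the discrepancy $|\#(\Lambda\cap A_i)-\#(\Lambda'\cap A_i)|$ into the boundary-crossing points of a hypothetical bde bijection $\phi$ with bound $M$, observe that these all lie in the tube $A_i^{+M}$, and then convert the point count in that tube to a multiple of $\mu(A_i^{+1})$. The three estimates you need for that conversion are exactly the ones the paper develops later for other purposes: the packing bound of Lemma \ref{lem:r-R-estim}, the tube-composition inclusion of Lemma \ref{lem:r+l}, and the comparison $\mu(A^{+t})\le c(t,d)\,\mu(A^{+1})$ of Lemma \ref{lem:thickboundary}. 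On that last point your ``routine covering argument'' does work and is worth spelling out, since the paper instead invokes the Kneser--Poulsen theorem: writing $A^{+t}=\partial A+\overline{B_t(0)}$ and covering $\overline{B_t(0)}$ by $N\le c(t,d)$ translates of $\overline{B_1(0)}$ gives $A^{+t}\subseteq\bigcup_j (A^{+1}+v_j)$ and hence the bound with constant $N$; this is more elementary than the paper's route. One small quibble: with the paper's convention that every open $r$-ball contains at most one point of $\Lambda$, distinct points are only guaranteed to be $r$ apart, so the disjoint balls in your packing bound should have radius $r/2$; this changes only the constant. Your proof also makes transparent that only the ``easy'' direction of the Laczkovich-type machinery is needed here --- no Hall marriage theorem --- which the paper's citation obscures.
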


In \cite{FSS}, the result above was formulated using sets $A_i$ that are
unions of lattice cubes of appropriate size. Since we deal with 
Delone sets we can ``approximate'' any van Hove sequence $(A_i)_i$
by an appropriate union of lattice cubes that are small enough that each cube
contains either one or zero points of $\Lambda$. For more details
see \cite{G} or \cite{FSS}.
 
\begin{exam} \label{ex:onlyrich}
As a test case for what follows we consider the set  
$L = \Z \cup \{ 1/2 + 2^n \mid n \in \N_0 \}$ and compare $L$ to the integers $\Z$. 
Both have density 1, but $\Z \not \bd L$. This can be seen by using 
Theorem \ref{thm:lacz} above, and observing for each $k>0$ there are intervals 
$Q_i : = [0,2^i+1]$ such that 
\[ \# (Q_i \cap \Lambda) - \#(Q_i \cap \Z) > i. \]
Note that $L$ contains ``rich'' regions (i.e. regions where
the number of points is larger than the expected number of points according to the density), but no ``poor'' 
regions. 
\end{exam}

\section{Proof of Theorem \ref{thm:one-or-many}}

Note the difference between a patch $E \cap \Lambda$, which is a finite collection of points,
and its support $E$. Also note that we have two different ways to move
patches around: either by translating both its support and $\Lambda$ to get
$(E \cap \Lambda)-x = (E-x) \cap (\Lambda-x)$, 
or by translating just the support to get $(E-x) \cap \Lambda$. 
Both ways yield patches occurring in some $\Lambda' \in \X_{\Lambda}$ and we will use both.

The strategy of the proof of Theorem \ref{thm:one-or-many} is as follows. If $\Lambda$ is not bde to a 
lattice, then there exist large regions that are ``deviant'' (meaning either very rich or very poor in points of $\Lambda$ compared to the expected number of points according to the density of $\Lambda$) and translates of those
regions that are either ``normal'' (neither rich nor poor) or have a discrepancy of the opposite sign of the 
original region; we will refer to the latter sets as ``normal'' anyway. We will recursively define larger and larger 
regions $P_i$ around the origin and find elements of $\X_\Lambda$ for which the regions $P_i$ are either deviant
or normal. In fact, for each infinite word in $\{D,N\}^\N$ we will construct an element of $\X_\Lambda$ 
where the form of each $P_i$ corresponds to the $i$-th letter in the word. 
Any two such elements of $\X_{\Lambda}$ with words $u,u'$ that differ in infinitely
many letters are not bde to each other by Theorem \ref{thm:l-not-bd}. Since there are uncountably many
sequences in $\{D,N\}^\N$, and since each tail-equivalence class is countable, there are uncountably many 
elements of $\X_\Lambda$, no two of which are bde. 

We construct deviant and normal patches $D_i'$ and $N_i'$ around the origin recursively. 
Let $\rho$ be the density of $\Lambda$ and suppose that $D_{i-1}'$ and $N_{i-1}'$ have already been 
constructed. Since $\Lambda$ is not bde to a lattice, there exist 
a region $D_i'$ such that the ratio
$$ \frac{|\#(\Lambda \cap D_i') -\rho \mu(D_i')|}{\mu(D_i^{+1})}$$
is arbitrarily large. In other words, for which $D_i'$ is deviant. As shown below, there is also a ``normal'' 
translate $N_i'$ of $D_i'$ for which the discrepancy $\#(N_1' \cap \Lambda) - \varrho \mu(N_i')$
has the opposite sign as the discrepancy of $D_i'$. Without loss of generality, we can choose $D_i'$
so large that there are copies of $D_{i-1}'$ and $N_{i-1}'$ near the center of $D_i'$, and likewise near
the center of $N_i'$. Note that the construction so far only involves looking for regions in the fixed Delone
set $\Lambda$. 

Next we construct Delone sets in $\X_\Lambda$ corresponding to each infinite word in $\{D,N\}^\N$. Let $u$ be 
such a word. Place a copy of $D_1'$ or $N_1'$, centered at the origin, according to whether the first letter of $u$ is
$D$ or $N$. Call this copy $D_1$ or $N_1$, and let $P_1 = D_1$ or $N_1$. 
Since both $D_2'$ and $N_2'$ contain copies of both $D_1'$ and $N_1'$ near their centers, we can 
extend $P_1$ to a copy $D_2$ or $N_2$ of $D_2'$ or $N_2'$, according to whether the second letter of 
$u$ is $D$ or $N$, and we can call this copy $P_2$. Repeat for each index $i \in \N$. Note that each $P_i$ is a patch
$N'_i-x_i$ or $D'_i-x_i$ in $\Lambda - x$. The union of the $P_i$'s is a Delone set $\Lambda_u \in \X_\Lambda$. 
Specifically, $$ \Lambda_u = \bigcup_i P_i = \lim_{i \to \infty} (\Lambda - x_i).$$

The tricky point is that, if $u \ne u'$, then the $i$-th patch $P_i$ of $\Lambda_u$ is not perfectly aligned with 
the $P_i$ of $\Lambda_{u'}$, so we cannot directly compare the two $P_i$'s in Theorem \ref{thm:l-not-bd}.
Instead, we must compare patches defined by small translates (within $\Lambda_u$) of supports of deviant patches $D_i$ in $\Lambda_u$ to 
normal patches $N_i$ in $\Lambda_{u'}$, or vice-versa. 
The remainder of the proof is a series of estimates to show that suitable $D'_i$'s and $N'_i$'s exists, such that
these small translates are still sufficiently deviant to apply Theorem \ref{thm:l-not-bd}.

The following result is standard, but for completeness we provide a sketch of the proof.
\begin{lem}
Let $\Lambda$ be a Delone set in $\R^d$ with density $\varrho$ and let $E$ be a 
bounded measurable set. Then there exist vectors $x_1,x_2\in\R^d$ such that 
\[ \#((E+x_1)\cap \Lambda)\leq \varrho\mu(E) \qquad \text{and} \qquad \#((E+x_2)\cap 
\Lambda)\geq \varrho\mu(E).\]
\end{lem}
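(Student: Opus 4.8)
The plan is to average the point count $\#((E+x)\cap\Lambda)$ over all translates $x$ of $E$, show that this average tends to $\varrho\mu(E)$ along a van Hove sequence, and then extract the two vectors by exploiting the fact that the count is integer-valued. Intuitively, an average of $\varrho\mu(E)$ cannot be achieved unless some translate lies weakly below it and some lies weakly above it.

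First I would fix a van Hove sequence $(A_i)_i$ and compute, via Fubini's theorem,
\[
\int_{A_i} \#\bigl((E+x)\cap\Lambda\bigr)\,dx
= \sum_{\lambda\in\Lambda}\mu\bigl(A_i\cap(\lambda-E)\bigr),
\]
using that $\lambda\in E+x$ is equivalent to $x\in\lambda-E$. Writing $D=\mathrm{diam}(E)$, every $\lambda\in\Lambda$ lying at distance more than $D$ from $\partial A_i$ contributes the full weight $\mu(E)$, every $\lambda$ lying more than $D$ outside $A_i$ contributes $0$, and each remaining ``boundary'' point contributes a value in $[0,\mu(E)]$. Since $\Lambda$ is uniformly discrete, the number of these boundary points is at most $C\,\mu(A_i^{+D})$ for some constant $C$ depending only on $\Lambda$ and $D$, so
\[
\Bigl|\,\frac{1}{\mu(A_i)}\int_{A_i}\#\bigl((E+x)\cap\Lambda\bigr)\,dx
- \mu(E)\,\frac{\#(\Lambda\cap A_i)}{\mu(A_i)}\Bigr|
\le \mu(E)\cdot C\,\frac{\mu(A_i^{+D})}{\mu(A_i)}.
\]
By the van Hove property \eqref{eq:def-vanhove} the right-hand side tends to $0$, while the existence of the density gives $\#(\Lambda\cap A_i)/\mu(A_i)\to\varrho$. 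Hence the average of $\#((E+x)\cap\Lambda)$ over $x\in A_i$ converges to $\varrho\mu(E)$.

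Next I would produce the two vectors by a contradiction argument using integrality. If no $x_1$ with $\#((E+x_1)\cap\Lambda)\le\varrho\mu(E)$ existed, then every translate would satisfy $\#((E+x)\cap\Lambda)>\varrho\mu(E)$; since the count is an integer this forces $\#((E+x)\cap\Lambda)\ge\varrho\mu(E)+\delta$ with $\delta=\lfloor\varrho\mu(E)\rfloor+1-\varrho\mu(E)>0$, so every average over $A_i$ would exceed $\varrho\mu(E)+\delta$, contradicting the limit just established. The existence of $x_2$ follows symmetrically, replacing $\lfloor\varrho\mu(E)\rfloor+1$ by $\lceil\varrho\mu(E)\rceil-1$.

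The main obstacle is the boundary estimate: one must verify that the contribution of points of $\Lambda$ near $\partial A_i$ is genuinely negligible compared with $\mu(A_i)$. This is exactly where the Delone hypothesis enters, bounding the boundary point count by a constant multiple of $\mu(A_i^{+D})$, and where the van Hove condition makes that bound vanish in the limit. The rest is routine bookkeeping.
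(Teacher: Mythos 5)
Your proposal is correct and follows essentially the same route as the paper's sketch: average $\#((E+x)\cap\Lambda)$ over translates (which you make precise via Fubini over a van Hove sequence and a boundary estimate) and then use integrality of the count to upgrade the approximate inequalities to the weak ones claimed. The only cosmetic point is that your boundary bound should use $\sup_{e\in E}\|e\|$ rather than $\mathrm{diam}(E)$, or assume without loss of generality that $0\in E$; this does not affect the argument.
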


\begin{proof}[Sketch of proof:]

If we average $\#((E+x) \cap \Lambda)$ over all values of $x \in \R^d$ we must 
get $\varrho\mu(E)$. This means that
for every $\epsilon>0$ there must a point $x$ for which $\#((E+x)\cap \Lambda)\le \varrho\mu(E) + \epsilon$. 
However, $\#((E+x)\cap \Lambda)$ is always an integer, and so for small enough $\epsilon$ cannot be strictly between
$\varrho\mu(E)$ and $\varrho\mu(E)+\epsilon$. Thus there must be an $x_1$ satisfying the first inequality. The
second is similar.
\end{proof}

The following result is then immediate and allows us to construct normal patches $N'_i$ from deviant patches $D'_i$.

\begin{lem} \label{lem:exceptional}
Let $\Lambda \subset \R^d$ be a Delone set with density $\varrho$
such that $\Lambda$ is not bde to any lattice in $\R^d$. Let $E$ be a
compact subset of $\R^d$ such that $\#(\Lambda \cap E)-\varrho
\mu(E)\geq 0$ ({\em resp.} $\leq 0$). Then there is a translation $E-x$ of $E$ such that
$\#(\Lambda \cap (E-x))-\varrho \mu(E)\leq 0$ ({\em resp.} $\geq 0$).
\end{lem}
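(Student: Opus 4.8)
The plan is to deduce this immediately from the preceding averaging lemma. Note first that the hypothesis that $\Lambda$ is not bde to any lattice plays no role in the argument for this particular statement: that assumption is needed elsewhere in the construction, to guarantee that genuinely \emph{deviant} patches $D_i'$ exist at all, but here it may be ignored. I would treat the case $\#(\Lambda \cap E) - \varrho\mu(E) \geq 0$ in detail; the parenthetical case is identical with all inequalities reversed.

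First I would invoke the preceding lemma applied to the set $E$. It supplies a vector $x_1 \in \R^d$ with $\#((E+x_1) \cap \Lambda) \leq \varrho\mu(E)$. Setting $x := -x_1$, and using that Lebesgue measure is translation invariant so that $\mu(E-x) = \mu(E)$, I obtain
\[ \#(\Lambda \cap (E-x)) = \#((E+x_1) \cap \Lambda) \leq \varrho\mu(E), \]
which is exactly the desired conclusion $\#(\Lambda \cap (E-x)) - \varrho\mu(E) \leq 0$. For the case $\#(\Lambda \cap E) - \varrho\mu(E) \leq 0$ I would instead take the vector $x_2$ supplied by the preceding lemma, for which $\#((E+x_2) \cap \Lambda) \geq \varrho\mu(E)$, and set $x := -x_2$.

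There is no real obstacle here. The substantive point --- that the translational average of $\#((E+x) \cap \Lambda)$ equals $\varrho\mu(E)$, so that translates realizing values on both sides of the mean must occur --- was already carried out in the sketch of the preceding lemma, and an integrality argument there even upgrades the weak inequality at the mean to the sharp form needed. The only care required is bookkeeping: reconciling the sign convention $E+x$ used in the preceding lemma with the convention $E-x$ appearing in the statement, and observing that the conclusion is phrased with $\varrho\mu(E)$ rather than $\varrho\mu(E-x)$, which is harmless precisely because translation preserves $\mu$.
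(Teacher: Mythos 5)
Your proposal is correct and matches the paper's argument: the paper states that Lemma \ref{lem:exceptional} is ``immediate'' from the preceding averaging lemma, which is exactly the deduction you carry out, with the sign-convention and translation-invariance bookkeeping made explicit. Your observation that the non-bde hypothesis is not actually used here is also accurate.
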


We also need the following elementary identity.

\begin{lem} \label{lem:r+l}
Let $r,\ell>0$, then $(E^{+\ell})^{+r}\subseteq E^{+(\ell+r)}$.
\end{lem}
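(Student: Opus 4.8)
The plan is to unwind the definition of the $\varepsilon$-tube notation and reduce everything to the triangle inequality for distance-to-a-set. Fix $x\in (E^{+\ell})^{+r}$; by definition this means $d_2\bigl(x,\partial(E^{+\ell})\bigr)\le r$, so there is a point $y\in\partial(E^{+\ell})$ with $\|x-y\|\le r$ (the infimum defining $d_2$ is attained, or approached arbitrarily closely, by points of the closed set $\partial(E^{+\ell})$). The goal is to bound $d_2(x,\partial E)$ by $\ell+r$, which is exactly the condition $x\in E^{+(\ell+r)}$.

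First I would establish the one nontrivial geometric fact: every point $y\in\partial(E^{+\ell})$ satisfies $d_2(y,\partial E)\le \ell$. This follows from observing that $E^{+\ell}=\{z\in\R^d \mid d_2(z,\partial E)\le \ell\}$ is a closed set, being the preimage of $[0,\ell]$ under the continuous map $z\mapsto d_2(z,\partial E)$. A closed set contains its own topological boundary, so $\partial(E^{+\ell})\subseteq E^{+\ell}$, and hence any $y\in\partial(E^{+\ell})$ already lies in $E^{+\ell}$, giving $d_2(y,\partial E)\le \ell$ directly from the definition of $E^{+\ell}$.

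With that in hand, the conclusion is a single application of the triangle inequality. Choosing $w\in\partial E$ with $\|y-w\|$ as close as desired to $d_2(y,\partial E)\le\ell$, I would estimate
\[
d_2(x,\partial E)\le \|x-w\|\le \|x-y\|+\|y-w\|\le r+\ell,
\]
so that $x\in E^{+(\ell+r)}$. Since $x$ was arbitrary, this proves the claimed inclusion $(E^{+\ell})^{+r}\subseteq E^{+(\ell+r)}$.

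The only step requiring any care is the boundary characterization $\partial(E^{+\ell})\subseteq E^{+\ell}$, and the main obstacle is resisting the temptation to overcomplicate it by analyzing the geometry of the tube explicitly; once one notes that $E^{+\ell}$ is closed, the inclusion of the boundary is automatic and the rest is routine. One should also be mildly attentive to edge cases where $\partial E$ is empty (so that $d_2(\cdot,\partial E)=+\infty$ and every tube is empty), in which case the inclusion holds trivially.
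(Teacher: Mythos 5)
Your proof is correct and follows essentially the same route as the paper's: choose a point of $\partial(E^{+\ell})$ within $r$ of $x$, a point of $\partial E$ within $\ell$ of that, and conclude by the triangle inequality. If anything you are more careful than the paper, which simply asserts the existence of $y\in\partial E$ with $\|y'-y\|\le\ell$ without spelling out your observation that $E^{+\ell}$ is closed and hence contains its boundary.
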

\begin{proof}
Let $z \in (E^{+\ell})^{+r}$. By definition there are $y \in \partial E$ and $y'$
such that $\| z - y'\| \le r$ and $\|y' - y\| \le \ell$, hence 
\[ \|z - y\| \le \| z - y' \| + \| y'-y\| \le \ell+r, \]
hence $z \in E^{+(\ell+r)}$. 
\end{proof}

We also need some result on estimates on the minimal and the maximal
number of points of a Delone set $\Lambda$ within a region $E$. In the
sequel we always assume that $E$ is a bounded measurable set.
\begin{lem} \label{lem:r-R-estim} There exist constants $\eta, \eta'$ such that
for any Delone set $\Lambda$ with parameters $r>0$ and $R>0$ and any bounded region $E \subset \R^d$,  
\[ \#(E \cap \Lambda) \ge \frac{\eta}{R^d} (\mu(E) - \mu(E^{+R}))
\quad \mbox{ and } \quad
\#(E \cap \Lambda) \le \frac{\eta'}{r^d} (\mu(E) + \mu(E^{+r})) \]
Here, $\eta$ and $\eta'$ depend only on $r$, $R$, and the dimension $d$.
\end{lem}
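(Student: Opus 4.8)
The plan is to derive both inequalities from elementary volume--packing arguments, using the two defining properties of a Delone set separately: uniform discreteness yields the upper bound and relative density yields the lower bound. Throughout I would write $\omega_d = \mu(B_1(0))$ for the volume of the unit ball, so that a ball of radius $s$ has measure $\omega_d s^d$; the constants will turn out to depend only on $d$ through $\omega_d$ (which is of course consistent with the stated claim that they depend only on $r$, $R$, and $d$).

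For the upper bound I would first note that uniform discreteness forces the points of $\Lambda$ to be $2r$-separated: if two distinct points lay at distance less than $2r$, the open ball of radius $r$ about their midpoint would contain both, contradicting the definition. Hence the balls $B_r(p)$ with $p \in E \cap \Lambda$ have pairwise disjoint interiors. Each such ball is contained in the closed $r$-neighborhood of $E$, and every point of that neighborhood either lies in $E$ or lies within distance $r$ of $\partial E$; thus the neighborhood has measure at most $\mu(E) + \mu(E^{+r})$. Comparing volumes gives $\#(E \cap \Lambda)\,\omega_d r^d \le \mu(E) + \mu(E^{+r})$, so $\eta' = 1/\omega_d$ works.

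The lower bound is the step I expect to be the main obstacle, because relative density only guarantees a point of $\Lambda$ inside each closed ball of radius $R$, whereas I need a lower bound on the number of \emph{distinct} points lying inside $E$. The fix is a packing--covering argument on the inner region $G = E \setminus E^{+R}$: for $x \in G$ one has $B_R(x) \subseteq E$, and $\mu(G) \ge \mu(E) - \mu(E^{+R})$. I would choose a maximal subset $\{y_1, \dots, y_N\} \subseteq G$ (finite, since $G$ is bounded) whose points are pairwise at distance greater than $2R$. Disjointness of the balls $B_R(y_i) \subseteq E$, together with relative density, then produces $N$ distinct points of $E \cap \Lambda$, so $\#(E \cap \Lambda) \ge N$. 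On the other hand, maximality forces the balls $B_{2R}(y_i)$ to cover $G$, whence $N\,\omega_d (2R)^d \ge \mu(G) \ge \mu(E) - \mu(E^{+R})$. Combining the two estimates gives $\#(E \cap \Lambda) \ge (2^d\omega_d)^{-1} R^{-d}\bigl(\mu(E) - \mu(E^{+R})\bigr)$, so $\eta = 1/(2^d \omega_d)$; when $\mu(E) - \mu(E^{+R}) \le 0$ the inequality is trivial, since the left-hand side is nonnegative. The one genuinely delicate point throughout is here: one must keep separate the statement that each chosen ball contains a point of $\Lambda$ from the statement that the chosen balls contribute $N$ \emph{distinct} points of $E \cap \Lambda$, and it is the disjointness of the $B_R(y_i)$ together with their containment in $E$ that supplies the latter --- the containment requirement being exactly what produces the correction term $\mu(E^{+R})$.
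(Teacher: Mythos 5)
Your proof is correct and rests on the same idea as the paper's: compare $\#(E\cap\Lambda)$ to volumes of disjoint $R$-balls contained in $E$ (for the lower bound) and of $r$-balls near the points of $\Lambda$ (for the upper bound), with the tube $E^{+R}$ resp.\ $E^{+r}$ absorbing the boundary effects. The only difference is how the balls are produced --- you use a maximal $2R$-separated subset of $E\setminus E^{+R}$ and the $2r$-separation of $\Lambda$ itself, whereas the paper averages over translates of a fixed periodic packing (resp.\ covering); this changes nothing but the explicit constants $\eta,\eta'$, and your handling of the degenerate case $\mu(E)\le\mu(E^{+R})$ and of the distinctness of the $N$ points of $\Lambda$ is sound.
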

\begin{proof} 
Consider a periodic packing of $\R^d$ by balls of radius $R$. (For instance, we could tile 
$\R^d$ by cubes of side length $2R$ and place one ball in each cube.) Averaging over translates of 
this packing, the number of balls with center in $E - E^{+R}$ is the packing density (which is a constant
$\eta$ that depends on dimension divided by $R^d$) times the volume of $E - E^{+R}$. Therefore there exists a 
specific packing where the number of such balls is at least $\frac{\eta}{R^d} \mu(E-E^{+R})$, which in turn is 
at least $\frac{\eta}{R^d}(\mu(E) - \mu(E^{+R}))$. Each of these disjoint balls is contained completely in $E$, and by the
Delone property each contains at least one point of $\Lambda$. This establishes the first inequality.

The second inequality is similar, except that we use a covering by balls of radius $r$ (e.g. by starting with 
a tiling of $\R^d$ by cubes of side length $2r/\sqrt{d}$ and using circumscribed balls) instead of a packing
by balls of radius $R$. Each point in $E \cap \Lambda$ must lie in a ball whose center is either in $E$ or in
$E^{+r}$. However, each such ball can contain at most one point in $\Lambda$ and the number of such balls is 
bounded by $\frac{\eta'}{r^n} ( \mu(E\cup E^{+r}) \le \frac{\eta'}{r^n} (\mu(E) + \mu(E^{+r}))$. \end{proof}

\begin{lem} \label{lem:thickboundary}
For all $\ell \ge 1$, $\mu(E^{+\ell}) \le \ell^d \mu(E^{+1})$.
\end{lem}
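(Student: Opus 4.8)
The plan is to reformulate the lemma through the distance function and then to exhibit an explicit, essentially area-expanding injection of $E^{+\ell}$ into $E^{+1}$. Write $u(x)=d_2(x,\partial E)$, so that $u$ is $1$-Lipschitz and $E^{+\ell}=\{x\in\R^d : u(x)\le\ell\}$. The inequality is sharp: equality holds when $E$ is a single point, where $E^{+\ell}=B_\ell(0)$ is a ball, so no lossy covering argument can succeed and I want a map reproducing the exact factor $\ell^d$. For each $x$ having a unique nearest point $y_x\in\partial E$ (which is almost every $x$, since $u$ is differentiable a.e.\ by Rademacher), set
\[ \Psi(x) = y_x + \tfrac{1}{\ell}(x-y_x) = x - \big(1-\tfrac1\ell\big)\,u(x)\,\nabla u(x), \]
the point obtained by sliding $x$ a fraction $1-\tfrac1\ell$ of the way toward the boundary along the segment to $y_x$.

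First I would record the two soft properties of $\Psi$. Since $\|\Psi(x)-y_x\|=\tfrac1\ell\,u(x)\le 1$ whenever $u(x)\le\ell$, we have $\Psi(E^{+\ell})\subseteq E^{+1}$. Next, $\Psi$ is injective on its domain: if $z=\Psi(x)$ then $z$ lies on the segment $[y_x,x]$, so $y_x$ is also the unique nearest boundary point of $z$ and $u(z)=\tfrac1\ell u(x)$; thus $x=y_z+\ell(z-y_z)$ is recovered from $z$, forcing any two preimages to coincide. Granting the Jacobian bound $|\det D\Psi(x)|\ge\ell^{-d}$ discussed below, the area formula for the Lipschitz map $\Psi$ then gives
\[ \mu(E^{+1}) \ge \mu\big(\Psi(E^{+\ell})\big) = \int_{E^{+\ell}} |\det D\Psi(x)|\,dx \ge \ell^{-d}\,\mu(E^{+\ell}), \]
which is exactly the claim.

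The heart of the proof, and the step I expect to be the main obstacle, is the Jacobian estimate $|\det D\Psi|\ge\ell^{-d}$. Differentiating and using $\|\nabla u\|=1$ (so $D^2u\,\nabla u=0$), the direction $\nabla u$ is an eigenvector of $D\Psi$ with eigenvalue $\tfrac1\ell$, while on the tangent space of the level set $\{u=t\}$, with $t=u(x)$, the eigenvalues are $1-(1-\tfrac1\ell)\,t\,\lambda_i$, where the $\lambda_i$ are the tangential eigenvalues of $D^2u$. Interpreting these through the principal radii of curvature $R_i$ of the parallel hypersurface $\{u=t\}$, each tangential factor equals $\tfrac{R_i+t/\ell}{R_i+t}$, and an elementary manipulation using $\ell\ge1$ shows every such factor is at least $\tfrac1\ell$; multiplying the $d-1$ tangential factors by the radial factor $\tfrac1\ell$ yields $|\det D\Psi|\ge\ell^{-d}$. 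The technical care needed is in justifying this at the level of $C^{1,1}$ regularity of $u$ away from its focal/cut locus (a closed set of measure zero, on which $\Psi$ need not be defined), and in handling the interior and exterior of $E$ simultaneously, where the relevant curvatures carry opposite signs but the one-sided bound survives because $u$ is semiconcave. An alternative route to the same estimate, should the curvature bookkeeping prove delicate, is the coarea formula, which reduces the claim to showing that the level-set areas $a(\tau)=\mathcal H^{d-1}(\{u=\tau\})$ satisfy $a(\tau)/\tau^{d-1}\le a(t)/t^{d-1}$ for $\tau\le t$; this integrates to $\tfrac{d}{dt}\big(t^{-d}\mu(E^{+t})\big)\le 0$ and hence to the lemma.
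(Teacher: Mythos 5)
Your route is genuinely different from the paper's. The paper reduces, via the scaling identity $\mu(E^{+\ell})=\ell^d\mu\big((\tfrac1\ell E)^{+1}\big)$, to the monotonicity statement $\mu\big((\tfrac1\ell E)^{+1}\big)\le\mu(E^{+1})$, and gets that by covering $\partial(\tfrac1\ell E)$ with small balls and citing the Kneser--Poulsen theorem for the expansive map $x\mapsto\ell x$. You instead build an explicit injection of $E^{+\ell}$ into $E^{+1}$ and bound its Jacobian from below; the underlying geometric mechanism (spreading centers apart cannot decrease the measure of a neighborhood) is the same, but your version trades the black-box citation for a direct computation. The skeleton is correct: $\Psi$ lands in $E^{+1}$, is injective off the (null) set of non-uniqueness, and the tangential eigenvalue bound follows most cleanly from the one-sided comparison $D^2u(x)\le u(x)^{-1}I$ at points of twice differentiability (compare $u$ with $z\mapsto\|z-y_x\|$, which touches $u$ from above at $x$), which immediately gives each tangential factor $\ge 1-(1-\tfrac1\ell)=\tfrac1\ell$ without any curvature-sign bookkeeping.

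Two points, however, are genuine gaps as written. First, $\Psi$ is \emph{not} a Lipschitz map for a general compact $E$ (the $E$ of the paper carries no boundary regularity at all): the nearest-point projection onto $\partial E$ is Lipschitz only for sets of positive reach, and already for $E=\{-1,1\}\subset\R$ the map $\Psi$ is discontinuous at $0$. Likewise the cut locus need not be contained in a closed null set (take $E$ a fat Cantor set), so "$C^{1,1}$ regularity of $u$ away from the cut locus" is not available. The argument survives because the inequality you actually need, $\mu(\Psi(A))\ge\int_A|\det D\Psi|$, holds for maps that are merely pointwise differentiable a.e.\ on $A$ and injective there (Alexandrov for the a.e.\ differentiability of $\nabla(u^2)$, then the Federer/Whitney decomposition of the differentiability set into countably many pieces on which $\Psi$ is Lipschitz); this substitution must be made explicit, since for non-Lipschitz maps the opposite inequality genuinely fails. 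Second, your coarea fallback has its inequality reversed: what integrates to $\tfrac{d}{dt}\big(t^{-d}\mu(E^{+t})\big)\le 0$ is $a(\tau)/\tau^{d-1}\ge a(t)/t^{d-1}$ for $\tau\le t$ (the normalized level-set area must be \emph{non-increasing}); the inequality as you state it already fails for $E$ a round ball, where $a$ is constant. With the first point repaired and the second corrected or dropped, your proof is a valid, more self-contained alternative to the paper's Kneser--Poulsen argument.
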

\begin{proof}
First we prove  $\mu((\frac{1}{\ell} E)^{+1})\leq \mu(E^{+1})$. Let $\epsilon>0$. Let $x_1,\ldots,x_n\in \partial(\frac{1}{\ell}E)$ be points such that the set of $\epsilon$-balls centered at $x_i$ covers $\partial(\frac{1}{\ell}E)$. 

The balls of radii $1+\epsilon$ centered at $x_i$'s cover $(\frac{1}{\ell}E)^{+1}$; let $X$ be the union of these balls. 
Similarly, the balls of radii $1+\epsilon$ centered at $\ell x_i$'s are contained in $E^{+(1+\epsilon)}$; let $Y$ be the 
union of these balls. Using a variant of the Kneser-Poulsen conjecture\footnote{Despite retaining its original name, this 
``conjecture'' is actually a proven theorem.} for continuous contractions, see \cite{Bez08,Csi06}, we get that $\mu(X)\leq \mu(Y)$ and therefore
$$\mu((\frac{1}{\ell} E)^{+1})\leq \mu(X)\leq \mu(Y)\leq \mu(E^{+(1+\epsilon)}).$$

Taking the limit as $\epsilon$ goes to $0$, $\mu((\frac{1}{\ell} E)^{+1})\leq \mu(E^{+1})$.

Now we can prove the lemma. Scaling $E^{+\ell}$ down by $\ell^{-1}$ yields $(\frac{1}{\ell} E)^{+1}$. Hence
$$\mu(E^{+\ell}) = \ell^d \mu ( (\frac{1}{\ell} E)^{+1}) \le \ell^d \mu(E^{+1}).$$
\end{proof}

The next lemma ensures that, given a patch of $E \cap\Lambda$, every patch of 
$\Lambda$ with translated support $E-x$ has ``approximately'' the same number of points,
the difference in the number of points being governed by $\mu(E^{+1})$ and the length of the shift.

\begin{lem} \label{lem:symdiff}
Let $\Lambda$ be a Delone set with parameters $r>0$ and $R>0$. 
Let $\ell \ge r$. Then there is $q>0$ such that, for every $x \in \R^d$ with
$\|x\| \le \ell$ and for every region $E$,
\[ | \# \big( (E+x) \cap \Lambda \big) - \#\big( E \cap \Lambda \big)| \le 
q \ell^d \mu(E^{+1}). \]
Here the constant $q$ depends on the Delone set $\Lambda$ and the dimension $d$.
\end{lem}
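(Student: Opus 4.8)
The plan is to control the difference in point counts by the number of points of $\Lambda$ lying in the symmetric difference $(E+x)\triangle E$, to observe that this symmetric difference sits inside a boundary tube of $E$, and then to bound the points in that tube using the estimates already established in Lemmas~\ref{lem:r-R-estim}, \ref{lem:r+l}, and \ref{lem:thickboundary}.

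First I would record the purely combinatorial inequality
\[ \big| \#((E+x)\cap\Lambda) - \#(E\cap\Lambda)\big| \le \#\big(((E+x)\triangle E)\cap\Lambda\big). \]
Since $E$ and $E+x$ are bounded and $\Lambda$ is uniformly discrete, both counts are finite, and the inequality follows by writing $\#A-\#B = \#(A\setminus B)-\#(B\setminus A)$ for $A=(E+x)\cap\Lambda$ and $B=E\cap\Lambda$, together with the fact that intersection distributes over symmetric difference.

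The geometric crux is the containment $(E+x)\triangle E \subseteq E^{+\ell}$ whenever $\|x\|\le\ell$. To see it, take $p\in(E+x)\setminus E$, so that $p\notin E$ but $p-x\in E$; since $E$ is compact, the segment from $p-x$ to $p$ meets $\partial E$ in some point $z$, and as $z$ lies on a segment of length $\|x\|$ we get $d_2(p,\partial E)\le\|p-z\|\le\|x\|\le\ell$, that is, $p\in E^{+\ell}$. The case $p\in E\setminus(E+x)$ is symmetric. Combining the two steps gives $\big|\#((E+x)\cap\Lambda)-\#(E\cap\Lambda)\big|\le\#(E^{+\ell}\cap\Lambda)$, and it remains only to estimate the right-hand side. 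Applying the upper bound of Lemma~\ref{lem:r-R-estim} to the region $E^{+\ell}$, then Lemma~\ref{lem:r+l} to see that $(E^{+\ell})^{+r}\subseteq E^{+(\ell+r)}$, and using $\ell+r\le 2\ell$ (valid since $\ell\ge r$), I would bound $\#(E^{+\ell}\cap\Lambda)$ by a constant multiple of $\mu(E^{+2\ell})$. Lemma~\ref{lem:thickboundary} then converts $\mu(E^{+2\ell})$ into $(2\ell)^d\mu(E^{+1})$, yielding the desired factor $\ell^d$ after absorbing the dimensional and Delone-parameter constants into a single $q$.

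The step requiring the most care is the small-shift regime $r\le\ell<1/2$, where $2\ell<1$ and Lemma~\ref{lem:thickboundary} does not apply directly. There I would instead use the trivial inclusion $E^{+2\ell}\subseteq E^{+1}$ together with the hypothesis $\ell\ge r$ to write $\mu(E^{+1})\le r^{-d}\ell^d\mu(E^{+1})$, so the bound still picks up the factor $\ell^d$ at the cost of enlarging $q$ by a factor depending only on $r$ and $d$. Taking $q$ to be the larger of the constants arising in the two regimes then gives a single $q$, depending only on $\Lambda$ (through $r$) and on $d$, that works uniformly for all $\ell\ge r$.
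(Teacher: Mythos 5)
Your proposal is correct and follows essentially the same route as the paper's proof: bound the count difference by the points of $\Lambda$ in the symmetric difference $(E+x)\triangle E$, trap that set in a boundary tube of width $\ell$, and then chain Lemmas \ref{lem:r-R-estim}, \ref{lem:r+l} and \ref{lem:thickboundary} to convert to $\ell^d\mu(E^{+1})$. Your explicit treatment of the regime $r\le\ell<1/2$, where Lemma \ref{lem:thickboundary} does not apply as stated, is in fact slightly more careful than the paper's argument, which invokes that lemma for $\mu(E^{+\ell})$ and $\mu(E^{+2\ell})$ without checking that $\ell\ge 1$.
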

\begin{proof}
Let $\ell \ge r$ and $\|x\| \le \ell$. 
\begin{align*} 
& | \# \big( (E+x) \cap \Lambda \big) - \#\big( E \cap \Lambda \big) | \\
& \le  
\#\Big( \Big( \big( (E+x) \setminus E \big) \cup \big( E \setminus
(E+x) \big) \Big) \cap \Lambda \Big)\\
& \le  \# \Big( \big( (E+x) \setminus E \big) \cap \Lambda \Big) 
+ \# \Big( \big( E \setminus (E+x) \big) \cap \Lambda \Big)\\
& \stackrel{(x\le \ell)}{\le} \# \big( (E+x)^{+\ell} \cap 
\Lambda \big) + \# \big( E^{+\ell} \cap \Lambda \big)\\
& \stackrel{\mbox{\scriptsize (Lem. \ref{lem:r-R-estim})}}{\le}  
\frac{\eta'}{r^d} \Big( \mu \big( (E+x)^{+\ell} \big) + \mu \big( ((E+x)^{+\ell})^{+r} \big) \Big) 
+ \frac{\eta'}{r^d} \Big( \mu \big( E^{+\ell} \big) + \mu \big( (E^{+\ell})^{+r} \big) \Big) \\
& \stackrel{\mbox{\scriptsize (Lem. \ref{lem:r+l})}}{\le}
  2\frac{\eta'}{r^d} \big( \mu(E^{+\ell}) + \mu(E^{+(\ell+r)}) \big)
  \le 2\frac{\eta'}{r^d} 
\big( \mu(E^{+\ell}) + \mu(E^{+(\ell+\ell)}) \big)\\ 
& \stackrel{\mbox{\scriptsize (Lem. \ref{lem:thickboundary}})}{\le}
2\frac{\eta'}{r^d} (\ell^d + (\ell+\ell)^d) \mu(E^{+1}) \le
2 \frac{\eta'}{r^d} (1+2^d)\ell^d \mu(E^{+1}). 
\end{align*}
With $q=2\frac{\eta'}{r^d}(1+2^d)$ and $\eta'$ the constant from Lemma 
\ref{lem:r-R-estim} the claim follows.
\end{proof}

The next lemma is the key to proving Theorem \ref{thm:one-or-many} and particularly to the construction of ``deviant'' patches of $\Lambda$.

\begin{lem} \label{lem:E-x-y}
Let $\Lambda \subset \R^d$ be a repetitive Delone set of FLC with 
$\dens(\Lambda)=\varrho$ such that $\Lambda$ is not bde to any 
lattice in $\R^d$. Let $c>0$ and $\ell>0$ be given. Then there exists
a bounded measurable set $E$ such that for all $x\in \R^d$ with $\|x\| 
\le \ell$ holds: 
\[ | \# \big( (E-x) \cap \Lambda \big) - \varrho \mu(E) | > c \mu (E^{+1})  \]
\end{lem}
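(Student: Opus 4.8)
Since $\Lambda$ is not bde to any lattice, Theorem~\ref{thm:lacz} fails for every constant $c'>0$. Thus for any target constant $c$ we can find a \emph{single} bounded measurable set $E_0$ with $|\#(E_0\cap\Lambda)-\varrho\mu(E_0)|>c'\mu(E_0^{+1})$, where $c'$ is a large constant to be chosen (much larger than $c$, by a factor depending on $\ell$, $r$, $R$, and $d$). The statement of the lemma is stronger than this: we need the deviation to survive not just for $E_0$ itself, but simultaneously for \emph{every} small translate $E_0-x$ with $\|x\|\le\ell$. The strategy is therefore to start from one highly deviant set $E_0$ and then argue that the deviation cannot be destroyed by a shift of size at most $\ell$, because Lemma~\ref{lem:symdiff} controls exactly how much $\#((E_0-x)\cap\Lambda)$ can differ from $\#(E_0\cap\Lambda)$.

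\textbf{Key steps.}
First I would apply the failure of Theorem~\ref{thm:lacz} to obtain $E=E_0$ with
\[
|\#(E\cap\Lambda)-\varrho\mu(E)|>c'\mu(E^{+1}),
\]
where I set $c':=c+q\ell^d$, with $q$ the constant from Lemma~\ref{lem:symdiff} (valid once $\ell\ge r$; if $\ell<r$ replace $\ell$ by $r$, which only enlarges the constant). Second, fix any $x$ with $\|x\|\le\ell$ and use the reverse triangle inequality together with Lemma~\ref{lem:symdiff}:
\begin{align*}
|\#((E-x)\cap\Lambda)-\varrho\mu(E)|
&\ge |\#(E\cap\Lambda)-\varrho\mu(E)| - |\#((E-x)\cap\Lambda)-\#(E\cap\Lambda)|\\
&> c'\mu(E^{+1}) - q\ell^d\mu(E^{+1})\\
&= c\,\mu(E^{+1}).
\end{align*}
Since this holds for every $x$ with $\|x\|\le\ell$, the set $E=E_0$ satisfies the conclusion. (Note that $\mu(E^{+1})=\mu((E-x)^{+1})$ since the boundary tube is translation-invariant, so the right-hand side is the same whether expressed via $E$ or $E-x$.)

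\textbf{The main obstacle.}
The delicate point is not the triangle-inequality bookkeeping but guaranteeing that $\mu(E^{+1})$ itself does not blow up relative to $\mu(E)$ in a way that makes the deviation meaningless. In principle the failure of Theorem~\ref{thm:lacz} could be witnessed by very ``thin'' or fractal-boundary sets $E$ for which $\mu(E^{+1})$ is large; but the inequality we need is phrased directly in terms of $\mu(E^{+1})$, so this is harmless here. The genuinely substantive work is the uniformity over all $x$ with $\|x\|\le\ell$, and this is precisely what Lemma~\ref{lem:symdiff} delivers: the per-shift discrepancy is bounded by $q\ell^d\mu(E^{+1})$, a quantity of the \emph{same order} as the boundary term controlling deviance, so absorbing it into a redefinition of the constant $c'$ costs only a bounded multiplicative factor. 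The role of repetitivity and FLC is not needed for this particular lemma's conclusion beyond ensuring $\Lambda$ has the fixed Delone parameters $r,R$ used in Lemma~\ref{lem:symdiff}; those hypotheses will matter elsewhere in the construction of $\Lambda_u$, but here the argument is essentially a stability estimate built on the two auxiliary lemmas.
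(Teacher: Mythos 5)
Your proposal is correct and follows essentially the same route as the paper's proof: take $E$ witnessing the failure of Theorem~\ref{thm:lacz} with constant $c+q\ell^d$, then absorb the shift error $q\ell^d\mu(E^{+1})$ from Lemma~\ref{lem:symdiff} via the triangle inequality. Your added remark about replacing $\ell$ by $\max\{\ell,r\}$ when $\ell<r$ is a small point the paper leaves implicit, but otherwise the arguments coincide.
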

\begin{proof}
Let $q$ be as in Lemma \ref{lem:symdiff}. By Theorem \ref{thm:lacz} there 
is $E$ such that 
\begin{equation} \label{eq:wiggle}
|\#(E \cap \Lambda) - \varrho \mu(E) | > (c+q \ell^d) \mu(E^{+1}). 
\end{equation}
By Lemma \ref{lem:symdiff} we have that $|\# (E \cap \Lambda) - \# ( (E-x) \cap 
\Lambda ) | \le q \ell^d \mu(E^{+1})$. Replacing $\#(E \cap \Lambda)$ by
$\#((E-x) \cap \Lambda)$ in \eqref{eq:wiggle} changes the left hand side by 
less than $q \ell^d \mu(E^{+1})$. This yields the claim.
\end{proof}

\begin{lem} \label{lem:vanhove}
Let $\Lambda$ be a Delone set in $\R^d$ with $\dens(\Lambda)=\varrho$.
Let $(E_i)_i$ be a sequence of bounded measurable subsets of $\R^d$ that
violate the condition in Theorem \ref{thm:lacz}. That is, let the sequence $(c_i)_i$ be such
that $\lim\limits_{i \to \infty} c_i = \infty$, and for each $i$ 
\begin{equation} \label{eq:deviant}
 | \#(E_i \cap \Lambda) - \varrho \mu(E_i)| > c_i \mu(E^{+1}). 
\end{equation}
Then $(E_i)_i$ is a van Hove sequence.
\end{lem}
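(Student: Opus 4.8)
The plan is to prove the van Hove condition \eqref{eq:def-vanhove} directly, and the whole argument is driven by a single observation: while the hypothesis \eqref{eq:deviant} says the discrepancy $|\#(E_i\cap\Lambda)-\varrho\mu(E_i)|$ is at \emph{least} $c_i\mu(E_i^{+1})$ with $c_i\to\infty$, the Delone property forces this same discrepancy to be at \emph{most} linear in $\mu(E_i)$ (up to a correction of order $\mu(E_i^{+1})$). Squeezing these two facts together will show that the boundary tube $\mu(E_i^{+1})$ is asymptotically negligible compared to the bulk $\mu(E_i)$, which is exactly what we need.

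First I would establish the upper ceiling on the discrepancy. Splitting into the two cases according to the sign of $\#(E_i\cap\Lambda)-\varrho\mu(E_i)$, I use $\varrho\mu(E_i)\ge 0$ and $\#(E_i\cap\Lambda)\ge 0$ to bound the discrepancy by $\#(E_i\cap\Lambda)$ in the ``rich'' case and by $\varrho\mu(E_i)$ in the ``poor'' case. Applying the second inequality of Lemma \ref{lem:r-R-estim} to the point count then yields constants $C_1,C_2$, depending only on $\Lambda$ and $d$, with
\[ | \#(E_i\cap\Lambda)-\varrho\mu(E_i) | \le C_1\mu(E_i) + C_2\mu(E_i^{+r}). \]
Note that only the upper half of Lemma \ref{lem:r-R-estim} is used; we never need a lower bound on the number of points, since in the poor case $\varrho\mu(E_i)$ already suffices. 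Next I normalize the tube radius: by monotonicity of the tube when $r\le 1$ and by Lemma \ref{lem:thickboundary} when $r\ge 1$, we have $\mu(E_i^{+r})\le\max(1,r^d)\mu(E_i^{+1})$, so the bound becomes $|\#(E_i\cap\Lambda)-\varrho\mu(E_i)|\le C_1\mu(E_i)+C_3\mu(E_i^{+1})$ with $C_3=C_2\max(1,r^d)$.

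Now I would combine this ceiling with the hypothesis \eqref{eq:deviant}, which gives $c_i\mu(E_i^{+1}) < C_1\mu(E_i)+C_3\mu(E_i^{+1})$, i.e. $(c_i-C_3)\mu(E_i^{+1}) < C_1\mu(E_i)$. For all $i$ large enough that $c_i>2C_3$ we have $c_i-C_3>c_i/2$, hence
\[ \frac{\mu(E_i^{+1})}{\mu(E_i)} < \frac{2C_1}{c_i} \longrightarrow 0 \]
since $c_i\to\infty$ (this also forces $\mu(E_i)>0$ for large $i$, so the ratio is well defined). Finally I upgrade from $\varepsilon=1$ to arbitrary $\varepsilon>0$: for $\varepsilon\le 1$ monotonicity gives $\mu(E_i^{+\varepsilon})\le\mu(E_i^{+1})$, and for $\varepsilon>1$ Lemma \ref{lem:thickboundary} gives $\mu(E_i^{+\varepsilon})\le\varepsilon^d\mu(E_i^{+1})$; in either case $\mu(E_i^{+\varepsilon})/\mu(E_i)\le\max(1,\varepsilon^d)\,\mu(E_i^{+1})/\mu(E_i)\to 0$, which is precisely \eqref{eq:def-vanhove}.

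The only genuinely substantive step is securing the linear-in-$\mu(E_i)$ ceiling on the discrepancy in the first paragraph above; once that is in hand, everything else is monotonicity of the boundary tube together with the scaling estimate of Lemma \ref{lem:thickboundary}. The conceptual point worth flagging is the interplay of directions: the sets $E_i$ are \emph{assumed} to be badly deviant (a lower bound on discrepancy), yet this very deviance, when measured against the unconditional upper bound, can only be sustained if the boundary is vanishingly thin relative to the interior — forcing the van Hove property.
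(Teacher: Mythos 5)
Your proof is correct and follows essentially the same route as the paper: bound the discrepancy linearly in $\mu(E_i)$ (plus a tube correction) via Lemma \ref{lem:r-R-estim} and Lemma \ref{lem:thickboundary}, play that ceiling off against the deviance hypothesis to force $\mu(E_i^{+1})/\mu(E_i)\to 0$, and then pass to general $\varepsilon$ by monotonicity and scaling. The only (cosmetic) difference is that your sign case split lets you use just the upper estimate of Lemma \ref{lem:r-R-estim}, whereas the paper invokes both halves.
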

\begin{proof}
Inequality \eqref{eq:deviant} is equivalent to
\begin{equation} \label{eq:lim0} \frac{1}{c_i} \Big| \frac{\#(E_i \cap \Lambda)}{\mu(E_i)} - \varrho \Big|
> \frac{\mu(E_i^{+1})}{\mu(E_i)}.
\end{equation}

Using Lemma \ref{lem:r-R-estim} we get 
\[\frac{\eta}{R^d}-\frac{\mu(E_i^{+R})}{\mu(E_i)}\leq \frac{\#(E_i \cap \Lambda)}{\mu(E_i)}\leq \frac{\eta'}{r^d}+\frac{\mu(E_i^{+r})}{\mu(E_i)}.\]
From Lemma \ref{lem:thickboundary} we get upper bounds for $\mu(E_i^{+r})$ and $\mu(E_i^{+R})$ in terms of $\mu(E_i^{+1})$ (there is the trivial bound $\mu(E_i^{+1})$ if $r\leq 1$ or $R\leq 1$). Combining these estimates we get the following inequality for some positive constants $\alpha$ and $\beta$
\[\Big| \frac{\#(E_i \cap \Lambda)}{\mu(E_i)} - \varrho \Big|\leq \alpha+\beta  \frac{\mu(E_i^{+1})}{\mu(E_i)}. \]
Once we plug this in inequality \eqref{eq:lim0}, we get
\[\frac{1}{c_i} \left(  \alpha+\beta  \frac{\mu(E_i^{+1})}{\mu(E_i)}\right)
> \frac{\mu(E_i^{+1})}{\mu(E_i)}.\]

Considering the limits of both sides yields $\lim\limits_{i \to \infty} 
\frac{\mu(E^{+1})}{\mu(E_i)} = 0$. In order to show  $\lim\limits_{i \to \infty} 
\frac{\mu(E^{+\varepsilon})}{\mu(E_i)} = 0$ for all $\varepsilon >0$, we consider two cases. If $\varepsilon \le 1$, then
\[ \lim\limits_{i \to \infty} \frac{\mu(E_i^{+\varepsilon})}{\mu(E_i)} \le 
\lim\limits_{i \to \infty}\frac{\mu(E_i^{+1})}{\mu(E_i)} = 0. \] 
If $\varepsilon > 1$, then we use the estimate from Lemma \ref{lem:thickboundary} to get 
\[ \lim\limits_{i \to \infty} \frac{\mu(E_i^{+\varepsilon})}{\mu(E_i)} \le 
\lim\limits_{i \to \infty}\frac{\varepsilon^d\mu(E_i^{+1})}{\mu(E_i)} = 0. \] 
\end{proof}

Frequently a van Hove sequence $(E_i)_i$ is defined by requiring another
property in addition to \eqref{eq:def-vanhove}: namely, that the $E_i$ 
exhaust the entire space. For instance, the sequence $(B_i(x_i))_i$, 
with $x_i = (i,0, \cdots, 0)^T$, fulfills \eqref{eq:def-vanhove}, but the (closure of the) union of the balls is only 
the half-space $\R^{\ge 0} \times \R^{d-1}$. The next result ensures that we do not need this
further requirement in our context.

\begin{prop} \label{prop:balls}
Let $(E_i)_i$ be a van Hove sequence. Then for each $R>0$ there are
$i \ge 0, t_i \in \R^d$ such that $E_i$ contains a ball $B_R(t_i)$. 
\end{prop}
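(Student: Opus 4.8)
The plan is to argue by contradiction against the van Hove condition at scale $\varepsilon = R$. Suppose that for some fixed $R>0$ \emph{no} member $E_i$ of the sequence contains a ball of radius $R$. I will show that this forces $\mu(E_i^{+R}) \ge \mu(E_i)$ for every $i$, which is incompatible with $\lim_{i\to\infty} \mu(E_i^{+R})/\mu(E_i)=0$. Thus for each $R>0$ at least one $E_i$ must contain a ball $B_R(t_i)$, with $t_i$ its center.

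The geometric heart of the argument is the implication: if $E_i$ contains no ball of radius $R$, then $E_i \subseteq E_i^{+R}$. To establish this I would fix an arbitrary $x \in E_i$. By hypothesis $B_R(x) \not\subseteq E_i$, so there is a point $y \in B_R(x)$ with $y \notin E_i$. Since $E_i$ is compact, hence closed, I walk along the segment $\gamma(t) = (1-t)x + ty$ and set $t^* = \sup\{t \in [0,1] : \gamma(t) \in E_i\}$. Closedness gives $\gamma(t^*) \in E_i$, and $t^* < 1$ because $y = \gamma(1) \notin E_i$. The point $z = \gamma(t^*)$ then lies on $\partial E_i$, since every neighborhood of $z$ meets both $E_i$ (it contains $z$) and its complement (it contains $\gamma(t)$ for $t$ slightly above $t^*$). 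Finally $\|x - z\| = t^*\|x-y\| \le R$, so $d_2(x,\partial E_i) \le R$ and $x \in E_i^{+R}$.

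With this containment the conclusion is immediate. The van Hove property at scale $R$ supplies an index $i_0$ with $\mu(E_i^{+R})/\mu(E_i) < 1$, hence $\mu(E_i^{+R}) < \mu(E_i)$ and in particular $E_i \not\subseteq E_i^{+R}$, for all $i \ge i_0$. By the contrapositive of the implication above, any such $E_i$ must contain a ball of radius $R$, which proves the proposition directly. I expect the only delicate point to be this boundary-crossing step, namely verifying rigorously that the last point $z$ of the segment lying in the compact set $E_i$ genuinely belongs to $\partial E_i$ and is within distance $R$ of $x$; the rest is bookkeeping. One caveat worth stating explicitly is that the ratio $\mu(E_i^{+R})/\mu(E_i)$ is only meaningful when $\mu(E_i)>0$, which is part of the standing assumption on van Hove sequences, so no degenerate member of measure zero interferes with the argument.
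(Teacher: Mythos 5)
Your proof is correct and follows essentially the same route as the paper: assume no $E_i$ contains a ball of radius $R$, deduce $E_i \subseteq E_i^{+R}$, and contradict the van Hove condition at scale $\varepsilon = R$. The only difference is that you spell out the boundary-crossing argument (finding the last point of the segment lying in the compact set $E_i$) that the paper treats as immediate, which is a welcome bit of extra rigor.
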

\begin{proof}
Assume the contrary; that is, suppose there is $R>0$ such that for
all $t \in \R^d, i \ge 0$ holds: $B_R(t_i) \not \subseteq E_i$. This implies
that for all $t,i$ we have that the distance between $t$ and $\partial E_i$
is not greater than $R$. Consequently, for all $i$ we have $E_i \setminus
(E_i)^{+R} = \varnothing$. Since $E_i \ne \varnothing$ this implies
that for all $i>0$ holds $\mu(E_i) < \mu((E_i)^{+R})$, contradicting
the van Hove property \eqref{eq:def-vanhove}. 
\end{proof}

Now we can state the proof of our main result. 
\begin{proof}[Proof of Theorem \ref{thm:one-or-many}]
Let $\Lambda$ be a Delone set in $\R^d$ with $\dens(\Lambda)=\varrho$
such that $\Lambda$ is not bde to any lattice in $\R^d$. 
We choose an infinite word $u = u_1 u_2 \cdots$ over the alphabet 
$\{D,N\}$, and $(c_i)_i$ such that $\lim\limits_{i \to \infty} c_i = \infty$.

For $c>0$, we call a bounded measurable set $E$ $c$-\emph{deviant} if
\[ | \# (E \cap \Lambda) - \varrho \mu(E)| > c \mu(E^{+1}), \] 
otherwise we call $E$ $c$-\emph{normal}.

Let $\ell_1$ be arbitrary. By Theorem \ref{thm:lacz} there is a compact set 
$D_1'$ that is $c_1$-deviant. By Lemma \ref{lem:exceptional} there is set $N_1':=D_1'-y_1$ 
such that the {\em discrepancy} $\#(N_1' \cap \Lambda) - \varrho \mu(N_1')$ of $N_1'$ 
is zero or has the opposite sign as the discrepancy of $D_1'$. 
($N_1'$ may or may not be $c$-normal for any prescribed $c$, but in all cases the discrepancy of $N_1'$ differs greatly from that of 
$D_1'$, which is what we actually need.) Pick a point $x_1 \in D_1'$.
If the first letter $u_1$ of $u$ is $D$ then let $P_1:=(D'_1 \cap \Lambda) -x_1 = 
(D_1' - x_1) \cap (\Lambda-x_1)$, which is a patch in $\Lambda -x_1$. 
Otherwise let $P_1:=( N_1' \cap \Lambda) +y_1-x_1 = (N'_1+y_1-x_1) \cap 
(\Lambda + y_1-x_1)$, which is a patch in $\Lambda + y_1 - x_1$. 
The shape of support of $P_1$ is the same in both cases, but the underlying 
patch of the Delone set is different.

Choose $\ell_2$ such that  $\ell_2 > 2 \max \{ R_{rep}(D_1'\cap \Lambda), 
R_{rep}(N_1'\cap \Lambda)\}$, where $R_{rep}(P)$ denotes the repetitivity radius
of the patch $P$ (compare Section \ref{sec:intro}). 
By Lemma \ref{lem:E-x-y} there is $D_2'$ such that  $D_2'-x$ is 
$c_2$-deviant for all $x$ with $\|x\|\le\ell_2$. By Lemma \ref{lem:exceptional}
there is $y$ such that $N_2' = D_2'-y$ has the opposite discrepancy as $D_2'$ (or this discrepancy is 0). 
Let $B_s(t_1)$ be the largest ball contained in $D_2'$.

By repetitivity, both $D_2' \cap \Lambda$ and $N_2'\cap \Lambda$ contain 
translates of both $D_1'\cap \Lambda$ and $N_1' \cap \Lambda$ within distance $\frac{\ell_2}{2}$ of the center $t_1$
of $B_s(t_1)$, say with the points $x_1 \in D_1'$ and $y_1 \in N_1'$ corresponding to $x_{D2}$ and $x_{N2}$ in $D_2'$.
If $u_1=D$ we take $x_2=x_{D2}$ and if $u_1=N$ we take $x_2=x_{N2}$. Either way, define $D_2 = D_2'-x_2$, viewed as 
a pattern in $\Lambda - x_2$. That is, $D_2$ is a translate (of both support and Delone set) of $D_2'$ with a copy of 
$P_1$ close to the center. We similarly create $N_2$ as a translate of $N_2'$ that likewise extends $P_1$. 
Finally, we pick $P_2$ to be either $D_2$ or $N_2$, depending on whether $u_2$ is $D$ or $N$. 

The supports of the two possible choices of $P_2$ are not identical, since the relative positions of copies of $P_1$ in
$D_2'$ and $N_2'$ are not the same. However, their supports differ by translation by less than $\ell_2$. A translate
of $D_2$ that has the same support as $N_2$ still has a discrepancy greater in magnitude than $c_2\mu(D_2^{+1})$, 
while the discrepancy of $N_2$ has a discrepancy of the opposite sign. 

%Clearly the supports
%of the patches $E_2 \cap \Lambda$ and $((E_2-y) \cap \Lambda) + y$ are identical.

%In order to make the supports of the two translates of $P_1$ identical we 
%can choose $x, x'$ with $\|x\|\le \ell_2/2$ and $\|y\|\le \ell_2/2$ such that 
%$(E_2 \cap \Lambda)-x$ and $((E_2-y) \cap \Lambda)+y-x'$ have
%$P_1$ at identical positions. Now the supports $E_2-x$ and $E_2+y-y-x'$ are
%not longer equal, they differ by a translation $z$ with $\|z\| \le \ell_2$. 
%By Lemma \ref{lem:E-x-y} the patch $((E_2-z) \cap \Lambda)-x$ is still deviant
%wrt $c_2$ (and of course the second patch is still normal wrt $c_2$). 
%Hence let $P_2 = ((E_2-z) \cap \Lambda)-x$ if $u_2=D$, and let $P_2 = 
%((E_2-y) \cap \Lambda)+y-x'$ if $u_2=N$. By construction, the two patches
%have the same support, and the point count yields
%\[ \# (((E_2-z) \cap \Lambda)-x)  - \# (((E_2-y) \cap \Lambda)+y-x') > c_2 \mu(E^{+1}). \]

Now iterate: let $\ell_{i+1}$ be such that $\ell_{i+1} > 2 \max \{ r_{rep}(D_i'
\cap \Lambda), r_{rep}(N_i' \cap \Lambda)\}$ and continue as above, finding regions 
$D_{i+1}'$ and $N_{i+1}'$ in $\Lambda$ such that $D_{i+1}'$ and all its translations by at most $\ell_{i+1}$ are $c_{i+1}$-deviant, 
$N_{i+1}'$ has a discrepancy of the opposite sign as $D_{i+1}'$, and such that both $D_{i+1}'$ and 
$N_{i+1}'$ contain copies of $D_i'$ and $N_i'$ within a distance $\ell_{i+1}/2$ of the center of 
a large sphere (as in Proposition \ref{prop:balls}). We then defined translates 
$D_{i+1}$ and $N_{i+1}$ of $D_{i+1}'$ and $N_{i+1}'$, viewed as regions 
in translates of $\Lambda$, such that these pattern extend $P_i$. Finally we pick 
$P_{i+1}$ to be $D_{i+1}$ or $N_{i+1}$ depending on the $(i+1)$st letter of $u$. 

For any
word $u \in \{D,N\}^{\N}$, this procedure gives a nested sequence of patches $(P_i)_i$ (i.e. $P_i \subset 
P_{i+1}$). By Lemma \ref{lem:vanhove} the supports of
the patches $P_i$ are a van Hove sequence. By Proposition \ref{prop:balls}
these van Hove sequences contain arbitrary large balls. Since we have chosen
the balls $B_s(t_i)$ in each step to be the largest possible, their diameters
$s$ tend to infinity. By the closedness of $\X_{\Lambda}$ under translations 
we can assume without loss of generality 
that these balls are centered at 0. The union of the patches $P_i$ is then a Delone set 
$\Lambda_u \in \X_\Lambda$. If $u,u' \in \{D,N\}^{\N}$ 
differ in infinitely many letters, then we look at the points of disagreement and compare 
$N_i$ in one Delone set to a translate by less than $\ell_i$ 
of $D_i$ in the other. Applying Theorem \ref{thm:l-not-bd}, we see that 
$\Lambda_u \not\bd \Lambda_{u'}$ because the translate by at most $\ell_i$ of $D_i$ is still $c_i$-deviant. There are $2^{\aleph_0}$ elements of 
$\{D,N\}^\N$ and only countably many elements of each bde class, so there are
$2^{\aleph_0}$ distinct bde classes among the $\{\Lambda_u\}$ and there are at least $2^{\aleph_0}$ bde classes
in $\X_\Lambda$.

However, Delone sets that differ only by a translation are bde and 
$\X_\Lambda$ consists of only $2^{\aleph_0}$ translational orbits, so $\X_\Lambda$ can have at most $2^{\aleph_0}$ bde
classes. Thus the cardinality of the bde classes in $\X_\Lambda$ is exactly $2^{\aleph_0}$.
\end{proof}

\begin{rem} In order to maintain a strict separation between our results and those of Smilansky and Solomon \cite{SmSo}, 
we stated and proved Theorem \ref{thm:one-or-many} in the form in which we originally developed it, including the assumption
that an overall density exists. However, it is extremely easy to drop that assumption. If there is no overall density,
then there is an upper density $\rho_+$ and a lower density $\rho_-$. Instead of looking for regions 
that are rich or poor relative to ``the'' overall density, we can look instead for regions that are rich or poor relative to
an arbitrary intermediate density $\rho \in (\rho_-,\rho_+)$. For instance, we can look for Euclidean balls of large radius 
whose densities are greater than $(\rho_++\rho)/2$ or are less than $(\rho_-+\rho)/2$. The rest of the proof that there
exist uncountably many bde classes proceeds exactly as before. 
\end{rem}

\section*{Acknowledgements}
It is a pleasure to thank Philipp Gohlke and Yasushi Nagai for providing
the proof of Proposition \ref{prop:balls} (actually, two proofs).

The authors are also extremely thankful to Yaar Solomon and Alan Haynes for useful comments.

\end{document}